\newcommand{\NP}{\ensuremath{\mathsf{NP}}}
\newcommand{\Oh}{\mathcal{O}}
\newcommand{\oh}{o}
\newcommand{\N}{\ensuremath{\mathbb{N}}}
\newcommand{\Z}{\ensuremath{\mathbb{Z}}}
\newcommand{\trSAT}{\ensuremath{\mathsf{3\text{-}SAT}}}
\newcommand{\twSAT}{\mathsf{2\text{-}SAT}}
\newcommand{\SAT}{\mathsf{SAT}}
\newcommand{\constSAT}[1]{\mathsf{SAT}(S)}
\newcommand{\RCP}{\mathsf{CCP}}
\newcommand{\C}{\mathcal{C}}
\newcommand{\majority}{\mathsf{majority}}
\newcommand{\minority}{\mathsf{minority}}
\title{The Complexity of Color-constrained Paths in Semicomplete Multipartite Digraphs}
\author{Julian Christoph Brinkmann}{Research Group for Theoretical Computer Science\\Goethe University Frankfurt\\60629 Frankfurt am Main Germany}{J.Brinkmann@em.uni-frankfurt.de}{https://orcid.org/0009-0000-0332-4543}{}
\authorrunning{J. Brinkmann}
\begin{document}
	
\maketitle
\pagenumbering{roman}
\begin{abstract}
Every semicomplete multipartite digraph contains a quasi-Hamiltonian path, but deciding the presence of a quasi-Hamiltonian path with prescribed start and end vertex is~$\NP$-complete.
Recently, Bang-Jensen, Wang and Yeo\ (Discrete Appl. Math. 2025) showed that deciding the presence of a quasi-Hamiltonian cycle which does not contain at least one vertex from each color class is~$\NP$-complete.
Similarly, deciding the presence of a quasi-Hamiltonian cycle which intersects every part exactly once is also~$\NP$-complete as shown in the same work.

In this paper, we study the problem of finding a path with prescribed start and end vertex that contains at least~$a$ and at most~$b$ vertices from each color class in a semicomplete multipartite digraph where all color classes have size~$\alpha$.
This unifies the Hamiltonian path problem, the quasi-Hamiltonian path problem and the path-version of the cycle problems mentioned above, among other problems.
Using Schaefer's dichotomy theorem, we classify the complexity of almost all problems in this framework.
The open problems are the Hamiltonian path problem on semicomplete multipartite digraphs and the quasi-Hamiltonian path problem restricted to semicomplete multipartite digraphs with independence number two.

We then investigate semicomplete multipartite digraphs with independence number two from a structural perspective.
We generalize sufficient criteria for Hamiltonian~$(s,t)$-paths in semicomplete digraphs to sufficient criteria for quasi-Hamiltonian~$(s,t)$-paths in this class.
Although this does not settle the problem, our initial results suggest that this special case may be solvable in polynomial time.
\end{abstract}

\thispagestyle{plain}
\tableofcontents
\clearpage
\pagenumbering{arabic}

\section{Introduction}
A semicomplete multipartite digraph~$D$ is a simple directed graph~$D$ with color classes, also called parts or partite sets,~$V_1,\dots,V_c$ such that there exists at least one arc between a pair of vertices if and only if the vertices are in different color classes.
The sets~$V_1,\dots,V_c$ are the unique maximal independent sets in \(D\), so the independence number satisfies~$\alpha(D)=\max\{|V_1|,\dots,|V_c|\}$ and~\(\chi(D)=c\).
A multipartite tournament is a semicomplete multipartite digraph without cycles of length two.
A semicomplete digraph~$D$ is a semicomplete multipartite digraph with~$\alpha(D)=1$, i.e.~every color class consists of a single vertex.
A tournament is a semicomplete digraph without cycles of length two.
A quasi-Hamiltonian path (resp. cycle) in~$D$ is a path (resp. cycle) that contains at least one vertex from each color class.
For semicomplete digraphs, quasi-Hamiltonian paths are Hamiltonian paths.
Similarly, quasi-Hamiltonian cycles are Hamiltonian cycles.
An~$(x,y)$-path is a path with start vertex~$x$ and end vertex~$y$.
We will often abbreviate quasi-Hamiltonian path by QHP and semicomplete multipartite digraph by SMD.

Rédei~\cite{redei1934kombinatorischer} shows that the number of Hamiltonian paths in a tournament is odd.
In particular, every tournament contains a Hamiltonian path.
Similarly, every semicomplete multipartite digraph contains a quasi-Hamiltonian path.
While not every semicomplete multipartite digraph contains a Hamiltonian path, the longest path problem restricted to semicomplete multipartite digraphs can be solved in time~$\Oh(n^\beta)$ with an algorithm due to Gutin~\cite{DBLP:journals/siamdm/Gutin93}.
Here, the value~$\beta$ is the exponent of the running time of the fastest algorithm for computing a minimum cost maximum bipartite matching.
The best currently known value, \(\beta\leq2+\oh(1)\), is due to Chen et.~al.~\cite{DBLP:journals/cacm/ChenKLPGS23}.
As the longest path problem subsumes the Hamiltonian path problem, the same algorithm solves the Hamiltonian path problem restricted to semicomplete multipartite digraphs.
The complexity of the Hamiltonian path problem with prescribed endpoints restricted to semicomplete multipartite digraphs remains open.

In semicomplete digraphs, the presence of an~$(x,y)$-Hamiltonian path can be decided in time~$\Oh(n^5)$ using the algorithm by Bang-Jensen, Manoussakis and Thomassen~\cite{DBLP:journals/jal/Bang-JensenMT92}.
For semicomplete multipartite digraphs, the presence of a quasi-Hamiltonian path whose endpoints are in the set~$\{x,y\}$ can be decided in time \(\Oh(n^{27})\) using the algorithm by Bang-Jensen, Maddaloni, and Simonsen~\cite{DBLP:journals/dam/Bang-JensenMS13}.
However, when the direction of the path is made explicit as well, the problem becomes~$\NP$-complete as shown in the same work.
Although this is not stated in~\cite[Theorem~3.1]{DBLP:journals/dam/Bang-JensenMS13}, the proof shows that the problem remains hard even when restricting the inputs to SMDs with independence number exactly three.
This is due to the fact that the reduction is from Boolean satisfiability and, apart from auxiliary singleton color classes, the color classes in the reduction correspond to the literals in clauses of the given CNF-formula.
Thus, color classes of size three are sufficient for \(\NP\)-completeness when reducing from~$\trSAT$.

Recently, Bang-Jensen, Wang and Yeo~\cite{DBLP:journals/dam/Bang-JensenWY25} proved a similar result:
Deciding the presence of a quasi-Hamiltonian cycle~$C$ satisfying the inequality~$1\leq|V(C)\cap V_i|<|V_i|$ for all~$i$ such that~$1\leq i\leq c$, i.e.~a quasi-Hamiltonian cycle which avoids at least one vertex from each color class, is~$\NP$-complete.
Similarly, deciding the presence of a quasi-Hamiltonian cycle where the equality~$|V(C)\cap V_i|=1$ is satisfied for all~$i$ such that~$1\leq i\leq c$, i.e.~a quasi-Hamiltonian cycle which intersects each color class exactly once, is~$\NP$-complete.

These results motivate us to consider path problems with prescribed start and end vertices under cardinality constraints on~$|V(P)\cap V_i|$ for each~$i$ such that~$1\leq i\leq c$.
Concretely, we study the setting where constraints take the form~$a\leq|V(P)\cap V_i|\leq b$ for all~$i$ with~$1\leq i\leq c$, and all color classes have size exactly~$\alpha$.
We call this problem the \((a,b,\alpha)\)-color-constrained path problem, abbreviated as~$(a,b,\alpha)\text{-}\RCP$.
A formal definition is given in \cref{sec:covering-path}.
This problem contains as special cases the Hamiltonian path problem, the quasi-Hamiltonian path problem and the path-analogues of the cycle problems considered in~\cite{DBLP:journals/dam/Bang-JensenWY25}.
Using Schaefer's dichotomy theorem~\cite{DBLP:conf/stoc/Schaefer78}, we are able to settle the complexity of~$(a,b,\alpha)\text{-}\RCP$ for almost all choices of the parameters~$a$,~$b$ and~$\alpha$, proving the following:

\begin{restatable*}{mtheoremq}{rstMtOne}\label{thm:general-rcp-hardness}
	Let~$(a,b,\alpha)\in\Z^3$ satisfy~$0\leq a\leq b\leq\alpha$ and~$b\geq1$.
	If~$(a,b)=(0,\alpha)$ or \({\alpha=1}\), the problem~$(a,b,\alpha)\text{-}\RCP$ can be solved in polynomial-time.
	If~${(a,b)\notin\{(0,\alpha),(\alpha,\alpha)}\}$ and~$\alpha\geq3$, the problem~$(a,b,\alpha)\text{-}\RCP$ is \NP-complete.
\end{restatable*}

Thus, the unclassified cases are~$a=b=\alpha$, which corresponds to the Hamiltonian path problem, and the cases where~$\alpha=2$ with~$(a,b)\neq(0,2)$.
While semicomplete bipartite digraphs have received special attention in the literature, e.g.~in~\cite{DBLP:journals/jal/Yeo99}, we are unaware of any work that expressly deals with semicomplete multipartite digraphs of independence number at most~2.
To the best of our knowledge, this is the first work studying this particular class.
We generalize sufficient conditions for~$(x,y)$-Hamiltonian paths in semicomplete digraphs from~\cite{DBLP:journals/jal/Bang-JensenMT92} to sufficient conditions for \((x,y)\)-quasi-Hamiltonian paths in semicomplete multipartite digraphs.
Concretely, we obtain the following results:

\begin{restatable*}{mtheorem}{rstMtTwo}\label{thm:not-2-strong-without-xy}
	Let~$D=(V,A)$ be a 2-strong semicomplete multipartite digraph with distinct vertices~${x,y,z\in V}$ such that neither~$x$ nor~$y$ is contained in a~2-cycle and~$D-x$ as well as~$D-y$ are~2-strong.
	If~$D-\{x,y,z\}$ is not strong and~$\chi(D-\{x,y,z\})\geq4$, then~$D$ contains an~$(x,y)$-quasi-Hamiltonian path.
\end{restatable*}

\begin{restatable*}{mtheorem}{rstMtThree}\label{thm:2-sep-all-trivial}
	Let~$D=(V,A)$ be a 2-strong semicomplete multipartite digraph such that~$\alpha(D)\leq 2$ and~${|V|\geq 5+5\alpha(D)}$.
	Let~$x,y\in V$ be different vertices of~$D$ such that neither~$x$ nor~$y$ is contained in a 2-cycle,~$y\Rightarrow x$, and~$D-x$ as well as~$D-y$ are~2-strong.
	If all~2-separators of~$x$ and~$y$ (if any) are trivial, then~$D$ contains an~$(x,y)$-quasi-Hamiltonian path.
\end{restatable*}

\Cref{thm:not-2-strong-without-xy} generalizes~\cite[Theorem~3.3]{DBLP:journals/jal/Bang-JensenMT92}.
\Cref{thm:2-sep-all-trivial} generalizes~\cite[Theorem~3.4]{DBLP:journals/jal/Bang-JensenMT92}.
As~\cref{thm:2-sep-all-trivial} assumes the independence number to be at most 2, the problem of finding a QHP with prescribed start and end vertex restricted to this class may be easier than the general case.

Bang-Jensen, Manoussakis and Thomassen~\cite{DBLP:journals/jal/Bang-JensenMT92} use the corresponding conditions, among other results, to construct a polynomial-time algorithm for the~$(x,y)$-Hamiltonian path problem on semicomplete digraphs.
While the general \((x,y)\)-quasi-Hamiltonian path problem is~$\NP$-complete, the proof from~\cite{DBLP:journals/dam/Bang-JensenMS13} only works when allowing instances with~$\alpha(D)\geq3$.
The reduction is from~$\trSAT$ and the independence number corresponds to the number of different literals allowed in a clause of the input formula.
As~$\twSAT$ is polynomial-time solvable, this does not prove hardness for instances with~$\alpha(D)\leq2$.
Of course, the problem may still be~$\NP$-complete through a currently unknown reduction.
Settling the complexity of this case is an interesting direction for future work.

We also prove that the existence of a Hamiltonian path is equivalent to unilateral connectivity for SMDs with independence number at most~2, see \cref{thm:hamiltonian-criterion-nsmd}.
This implies that the Hamiltonian path problem restricted to this class can be solved in time~$\Oh(n^2)$, which improves on the fastest previously known algorithm for general SMDs.

\section{Terminology and Previous Work}\label{sec:prev}
For a digraph \(D\), the independence number of \(D\), denoted \(\alpha(D)\), is the independence number of the underlying graph.
Similarly, the chromatic number of \(D\), denoted \(\chi(D)\), is the chromatic number of the underlying graph.
Given a semicomplete multipartite digraph~$D$, we denote the unique partition~$\{V_1,\dots,V_c\}$ underlying~$D$ by~$\C(D)$.
Observe that~$\C(D)$ can be computed in polynomial time by inspecting the adjacency matrix.
For a subset~${U\subseteq V(D)}$, we denote by~$H(U)$ the set of vertices that share a color class with a vertex in~$U$.
Formally,~$H(U)=\{v\in V(D) : v\in W\in\C(D), W\cap U\neq\emptyset\}$.
For vertices~$v\in V(D)$, we write~$H(v)$ instead of~$H(\{v\})$.
We denote the subgraph induced by a subset~$U$ of \(V(D)\) by~$D[U]$.
Formally,~${D[U]=(U,\{vw\in A(D): v,w\in U\})}$.
An~$(s,t)$-path is a path starting at~$s$ and ending in~$t$.
An~$[s,t]$-path has end vertices in~$\{s,t\}$, i.e.~it is either an~$(s,t)$-path or a~$(t,s)$-path.
If the arc~$xy$ exists in~$D$, we say that~$x$ dominates~$y$.
For sets of vertices~$U,W\subseteq V(D)$, we write~$U\Rightarrow W$ to denote that no vertex in~$W$ dominates a vertex in~$U$.
We write~$U\rightarrow W$ if~$U\Rightarrow W$ and~$U\times W\subseteq A(D)$.
Given a path~$P=v_1\dots v_\ell$,~$P[v_i,v_j]$ denotes the subpath from~$v_i$ to~$v_j$ in~$P$, i.e.~$v_iv_{i+1}\dots v_{j-1}v_j$.
Similarly,~$P[v_i,v_j[$ denotes the path~$P[v_i,v_j]-v_j$.
The subpaths~$P]v_i,v_j]$ and~$P]v_i,v_j[$ are defined analogously.
We also use this notation for subpaths in directed cycles.
As the cycles are directed, this is unambiguous.
A digraph is \emph{strong} if it contains an~$(x,y)$-path for every pair~$x,y\in V(D)$ with~$x\neq y$.
A digraph is \emph{$k$-strong} if it contains at least~$k+1$ vertices, is strong and remains strong after removing any set of at most~$k-1$ vertices.
A digraph is \emph{unilaterally connected} if there exists an~$[x,y]$-path for every pair~$x,y\in V(D)$ with~$x\neq y$.
A~\emph{$k$-separator} of vertices~$x$ and~$y$ is a set~$X\subseteq V(D)$ such that~$|X|\leq k$ and there is no~$(x,y)$-path in~$D-X$.
A~$k$-separator of~$x$ and~$y$ is \emph{trivial} if~$N^+(x)\subseteq X$ or~$N^-(y)\subseteq X$.
Given an undirected graph~$G$, an \emph{orientation} of \(G\) is a digraph obtained from~$G$ by replacing each edge with a single arc.
A \emph{biorientation} of \(G\) is a digraph obtained from~$G$ by replacing each edge with an arc or a pair of opposing arcs.

When dealing with path problems in semicomplete multipartite digraphs, it is often helpful to consider the following decomposition:
\begin{lemmaq}[\cite{DBLP:journals/dm/TewesV99}]\label{lem:linear-decomp}
	Let~$D=(V,A)$ be a semicomplete multipartite digraph and let~$V_1,\dots,V_c$ be the color classes of \(D\).
	Then there exists a unique partition of~$V$ into~$R_1,\dots,R_k$, where, for all~$1\leq i\leq k$,~$D[R_i]$ is either a strong component of~$D$ or~$R_i\subseteq V_j$ for some~$1\leq j\leq c$, such that~$R_i\Rightarrow R_j$ for all~$1\leq i<j\leq k$ and there are~$x_i\in R_i$ and~$y_i\in R_{i+1}$ such that~$x_iy_i\in A$ for~$1\leq i<k$.
\end{lemmaq}
\noindent We call this unique partition the linear decomposition of~$D$.
The sets \(R_1,\dots,R_k\) are the linear components of \(D\).
There is an~$\Oh(n^2)$ algorithm to find the linear decomposition of a given SMD~\cite{DBLP:journals/dam/Bang-JensenMS13}:
Find the strong components, find an acyclic ordering of the strong components and then group together all the vertices from the same color class that form consecutive components in the ordering.

When constructing~$(s,t)$-quasi-Hamiltonian paths in SMDs, it is sufficient to find a collection of not necessarily internally disjoint~$(s,t)$-paths that cover all color classes such that the union of all these paths is an acyclic digraph.
This is due to the fact that two paths whose union does not contain a cycle can be combined to obtain a new path that covers at least one vertex from every color class covered by at least one of the paths.

\begin{lemmaq}[{\cite[Lemma 5.1]{DBLP:journals/dam/Bang-JensenMS13}}]\label{lem:merging}
	Let~$D=(V,A)$ be a semicomplete multipartite digraph,~$P$ be an~$(s_1, t_1)$-path and~$Q$ be an~$(s_2, t_2)$-path such that~$P\cup Q$ does not contain a directed cycle and~$X\subseteq V(P)\cup V(Q)$ be a vertex set where each vertex is from a distinct color class.
	Then there exists an $(s_i, t_j)$-path~$R$ in \(D\) such that \({X\subseteq V(R)\subseteq V(P)\cup V(Q)}\), where~$i,j\in\{1, 2\}$.
	Such a path can be found in~$\Oh(|V(P)\cup V(Q)|)$ time.
\end{lemmaq}

Restricted to semicomplete digraphs~$D$, \cref{lem:merging} says that~$D$ is path-mergeable, which is another generalization of tournaments, see also~\cite{DBLP:journals/jgt/Bang-JensenG98}.
Although the path \(R\) does, in general, not contain all vertices of the paths \(P\) and \(Q\), we will refer to this process as merging.
Using \cref{lem:linear-decomp,lem:merging}, it is not hard to prove the following results by considering the structure imposed by the connectivity conditions:

\begin{lemmaq}[{\cite[Theorem 5.2]{DBLP:journals/dam/Bang-JensenMS13}}]\label{lem:cycle-cover}
	Let~$D=(V,A)$ be a strong semicomplete multipartite digraph and let \(X\subseteq V\) be a vertex set where each vertex is from a distinct color class.
	There exists a cycle~$C$ containing~$X$ in~$D$ and such a cycle can be identified in time~$\Oh(n^2)$.
\end{lemmaq}

\begin{lemmaq}[Reformulation of {\cite[Lemma 5.6]{DBLP:journals/dam/Bang-JensenMS13}}]\label{lem:non-strong}
	Let~$D=(V,A)$ be a connected non-strong semicomplete multipartite digraph with~$x,y\in V$, let~$R_1,\dots,R_k$ be its linear decomposition and let~$x\in R_i$,~$y\in R_j$ with~$i<j$.
	There exists an~$(x,y)$-quasi-Hamiltonian path in~$D$ if and only if~$R_i\cup\dots\cup R_j$ intersects all color classes of~$V$.
	In this situation, there is an algorithm computing an~$(x,y)$-quasi-Hamiltonian path or deciding that there is none in time~$\Oh(n^2)$.
\end{lemmaq}

\begin{lemmaq}[{\cite[Lemma 5.8]{DBLP:journals/dam/Bang-JensenMS13}}]\label{lem:not-2-strong}
	Let~$D=(V,A)$ be a strong semicomplete multipartite digraph with different vertices~$x,y\in V$ and let~$D-x$ and~$D-y$ be strong.
	If~$D-\{x,y\}$ is not strong then there exists an~$(x,y)$-quasi-Hamiltonian path and such a path can be identified in time~$\Oh(n^2)$.
\end{lemmaq}

The following theorem summarizes Lemma 5.7, Lemma 5.8, Theorem 5.9 and Corollary 5.12 from~\cite{DBLP:journals/dam/Bang-JensenMS13} to characterize the existence of~$[x,y]$-quasi-Hamiltonian paths in sufficiently colorful SMDs.
A complete, more complex characterization of weakly quasi-Hamiltonian-connected SMDs was given in~\cite{DBLP:journals/dam/GuoLS12}, but \cref{thm:weak-qhp} is sufficient for our purposes.
\begin{theoremq}\label{thm:weak-qhp}
	Let~$D$ be a strong semicomplete multipartite digraph and~$x$,~$y$ distinct vertices of~$D$ such that~$\chi(D-\{x,y\})\geq 5$.
	Then~$D$ has an~$[x,y]$-quasi-Hamiltonian path unless one of the following two conditions is satisfied, in which case there is no~$[x,y]$-quasi-Hamiltonian path:
	\begin{enumerate}[label=(\arabic*)]
		\item ~$D-x$ is not strong with linear decomposition~$R_1,\dots,R_k$,~$y\in R_j$ and neither~$R_1\cup\dots\cup R_j$ nor~$R_j\cup\dots\cup R_k$ covers all color classes of~$D-H(x)$.
		\item ~$D-y$ is not strong with linear decomposition~$R_1,\dots,R_k$,~$x\in R_i$ and neither~$R_1\cup\dots\cup R_i$ nor~$R_i\cup\dots\cup R_k$ covers all color classes of~$D-H(y)$.\qedhere
	\end{enumerate}
\end{theoremq}

When trying to find~$[s,t]$-quasi-Hamiltonian paths, it can be helpful to consider quasi-Hamiltonian cycles in~$D-\{s,t\}$.
If the cycle is sufficiently large and obeys a particular structure, the following lemma can be used to obtain the desired path:
\begin{lemmaq}[{\cite[Lemma 5.11]{DBLP:journals/dam/Bang-JensenMS13}}]\label{lem:2-out-of-3-path}
	For any three distinct vertices of a strong semicomplete multipartite digraph, there exists a quasi-Hamiltonian path connecting two of them.
\end{lemmaq}

The following theorem generalizes~\cite[Theorem 5.4]{DBLP:journals/jct/Thomassen80a}, which was used in the algorithm for the~$(s,t)$-Hamiltonian path problem in~\cite{DBLP:journals/jal/Bang-JensenMT92}.
In \cref{sec:structural-results}, we will generalize the two main remaining structural results used in~\cite{DBLP:journals/jal/Bang-JensenMT92}.
\begin{theoremq}[{\cite[Theorem~3.2]{DBLP:journals/dam/GuoLS12}}]\label{thm:3-paths}
	Let~$D=(V,A)$ be a 2-strong semicomplete multipartite digraph and let~$x,y\in V$ be distinct vertices such that there exist three internally disjoint~$(x,y)$-paths of length at least 2 in~$D$.
	Then~$D$ contains an~$(x,y)$-quasi-Hamiltonian path.
\end{theoremq}

An important tool in our \NP-hardness proof is Schaefer's dichotomy theorem \cite[Theorem~2.1]{DBLP:conf/stoc/Schaefer78}.
In modern terms, this theorem classifies constrained satisfaction problems into polynomial-time solvable and \NP-complete cases based on the structure of the allowed relations, see \cite{DBLP:journals/csur/Chen09} for a more detailed discussion.
We present a slightly simplified version of Schaefer's result that is tailored to our purposes, see \cref{thm:dichotomy}.
Our presentation follows \cite{DBLP:journals/csur/Chen09}.
We begin with the required terminology.

Let \(\emptyset\neq S=\{R_1,\dots,R_\ell\}\) be a set of ternary Boolean relations.
The problem \(\constSAT{S}\) is defined as follows:
Given Boolean variables \(X_1,\dots,X_n\) and constraints \(R_{i_{j,0}}(X_{i_{j,1}},X_{i_{j,2}},X_{i_{j,3}})\), where \(1\leq i_{j,0}\leq\ell\) and \({1\leq i_{j,1}, i_{j,2}, i_{j,3}\leq n}\) for all \(1\leq j\leq m\), does there exist an assignment~\({f:\{X_1,\dots,X_n\}\to\{0,1\}}\) such that \((f(X_{i_{j,1}}),f(X_{i_{j,2}}),f(X_{i_{j,3}}))\in R_{j,0}\) for all \(1\leq j\leq m\)?

A polymorphism of a relation \(R\subseteq\{0,1\}^3\) is a function \(f:\{0,1\}^k\to\{0,1\}\) such that
for all~\({r_1,r_2,r_3\in R}\) it holds that the triple \((f(r_{1,1},r_{2,1},r_{3,1}),f(r_{1,2},r_{2,2},r_{3,2}),f(r_{1,3},r_{2,3},r_{3,3}))\) is in~\(R\), where~\({r_i=(r_{i,1},r_{i,2},r_{i,3})}\) for all \({i\in\{1,2,3\}}\).

Schaefer's dichotomy theorem allows classifying the complexity of \(\constSAT{S}\) based on whether at least one of six particular functions is a polymorphism of all relations in \(S\):

\begin{theoremq}[Simplified Dichotomy Theorem]\label{thm:dichotomy}
	Let \(S=\{R_1,\dots,R_\ell\}\) be a non-empty set of ternary Boolean relations.
	Then \(\constSAT{S}\) is polynomial-time solvable if and only if one of the conditions~\labelcref{item:dichotomy-1} to~\labelcref{item:dichotomy-6} is satisfied.
	Otherwise, \(\constSAT{S}\) is \NP-complete.
	\begin{enumerate}[label=(\arabic*)]
		\item\label{item:dichotomy-1} The constant function \(f_0(x)=0\) is a polymorphism of every relation in \(S\).
		\item\label{item:dichotomy-2} The constant function \(f_1(x)=1\) is a polymorphism of every relation in \(S\).
		\item\label{item:dichotomy-3} The boolean AND function \(f_\land(x,y)=xy\) is a polymorphism of every relation in \(S\).
		\item\label{item:dichotomy-4} The boolean OR function \(f_\lor(x,y)=\max\{x,y\}\) is a polymorphism of every relation in \(S\).
		\item\label{item:dichotomy-5} The function \(\majority(x,y,z)=\left\lfloor(x+y+z)/2\right\rfloor\) is a polymorphism of every relation in \(S\).
		\item\label{item:dichotomy-6} The function \(\minority(x,y,z)=x+y+z\text{ mod }2\) is a polymorphism of every relation in~\(S\).\qedhere
	\end{enumerate}
\end{theoremq}

\section{Color Constrained Paths}\label{sec:covering-path}
In this section, we generalize the quasi-Hamiltonian path problem with prescribed endpoints by considering different constraints on the number of required vertices of each color.
Recall that~\(\C(D)\) is the unique partition of \(D\) into maximal independent sets and that it can be computed in polynomial time.
The quasi-Hamiltonian~$(s,t)$-path problem can be phrased as follows:
Given~$D$ and~$s,t\in V(D)$, decide if there exists an~$(s,t)$-path~$P$ such that~${1\leq |V(P)\cap U|\leq|U|}$ for all~$U\in\C(D)$.
By considering different lower and upper bounds, we obtain the following path problem on semicomplete multipartite digraphs:
\begin{center}
	\fbox{\begin{tabular}{ll}
			\multicolumn{2}{l}{\((a,b,\alpha)\)-color-constrained path problem~$\left[(a,b,\alpha)\text{-}\RCP\right]$}\\
			Instance: & SMD~$D$ such that~$|C|=\alpha$ for all~$C\in\C(D)$,~$s,t\in V(D)$\\
			\multirow{1}{*}{Question:} & \multirow{1}{.75\linewidth}{\justifying Does \(D\) contain an~$(s,t)$-path~$P$ such that~$a\leq|V(P)\cap C|\leq b$ for all~$C\in\C(D)$?}\\
	\end{tabular}}
\end{center}
\noindent The requirement that all color classes are of the same size is not a genuine restriction as any SMD with independence number~$\alpha(D)$ can be made to satisfy this condition by padding each color class with vertices that are not reachable from~$s$, e.g.~vertices that only have outgoing arcs.
This problem is hard for almost all choices of \((a,b,\alpha)\):
\rstMtOne

We prove \cref{thm:general-rcp-hardness} in three steps:
First, we discuss the polynomial-time solvable and open cases.
Second, we prove hardness for~$\alpha=3$ in \cref{thm:rcp-hardness}.
Finally, we lift the hardness from~$\alpha=3$ to the cases~$\alpha\geq4$ by induction over~$\alpha$ in \cref{cor:ccp-hardness}.

\subparagraph*{Polynomial-time solvable and open cases.}
Note that~$(0,\alpha,\alpha)\text{-}\RCP$ is simply the~$(s,t)$-path problem restricted to a special class of digraphs, so this case can be solved in polynomial time using any path-finding algorithm, e.g.~breadth-first search.
The case~${(a,b,\alpha)=(1,1,1)}$ corresponds to the \((s,t)\)-Hamiltonian path problem on semicomplete digraphs.
This problem is known to be polynomial-time solvable using the recursive algorithm of~\cite{DBLP:journals/jal/Bang-JensenMT92}.
Similarly, the problem~\((\alpha,\alpha,\alpha)\text{-}\RCP\) is the \((s,t)\)-Hamiltonian path problem on a special class of semicomplete multipartite digraphs.
The Hamiltonian path problem on general semicomplete multipartite digraphs without prescribed start and end vertices has an elegant solution using~1-path-cycle factors~\cite{DBLP:journals/siamdm/Gutin93}, but the complexity of the~$(s,t)$-Hamiltonian path problem on semicomplete multipartite digraphs remains open.
The remaining open cases both satisfy \(\alpha=2\).
The case~\({(a,b,\alpha)=(1,2,2)}\) corresponds to the quasi-Hamiltonian path problem with prescribed end vertices on the class of semicomplete multipartite digraphs with independence number at most two.
In the case~\((a,b,\alpha)=(1,1,2)\), this is further restricted so that each color class must be hit exactly once.

\subparagraph*{Hardness.}
We prove \NP-completeness of all remaining pairs~$0\leq a\leq b\leq 3$ for~$\alpha=3$ by a reduction from variants of \trSAT.
Given a Boolean formula in conjunctive normal form, an assignment of its variables is called~\emph{$(a,b)$-satisfying} if it satisfies at least~$a$ and at most~$b$ literals in every clause.
Consider the following variants of~\trSAT, which we will call~$(a,b)$-3-$\SAT$:
Given a Boolean formula~$\varphi$ in~3-CNF, decide if~$\varphi$ has an~$(a,b)$-satisfying assignment.
For~$0\leq a\leq 2$ and~$a\leq b\leq 3$, \(\NP\)-completeness of these problems follows from Schaefer's dichotomy theorem, see \cref{prop:dichotomy_example}.
We reduce~$(a,b)$-3-$\SAT$ to~$(a,b,3)\text{-}\RCP$ to prove \NP-hardness of the problem.

Our reduction requires that the same literal does not appear more than once in any given clause, so we prove two preprocessing lemmas that avoid this situation.
\Cref{lem:variable-reduction1} covers the cases where~$1\leq b\leq 2$ and \cref{lem:variable-reduction2} treats the case~$(a,b)=(2,3)$.

\begin{proposition}\label{prop:dichotomy_example}
	For~$0\leq a\leq 2$ and~$\max\{a,1\}\leq b\leq 3$, the problem $(a,b)$-3-$\SAT$ is \NP-complete.
\end{proposition}
\begin{proof}[Proof sketch]
	We only prove hardness of $(0,2)$-3-$\SAT$ to give an example of the application of \cref{thm:dichotomy}.
	The other cases are proved similarly.
	The problem $(0,2)$-3-$\SAT$ is equivalent to~\(\constSAT{S}\) where \(S=\{R_0,R_1,R_2,R_3\}\) and
	\begin{align*}
		R_0=\{(x,y,z)\in\{0,1\}^3: x+y+z\leq 2\} \hspace{12mm}& R_1=\{(x,y,z)\in\{0,1\}^3: x+y-z\leq 1\}\\
		R_2=\{(x,y,z)\in\{0,1\}^3: x-y-z\leq 0\} \hspace{12mm}& R_3=\{(x,y,z)\in\{0,1\}^3: x+y+z\geq 1\}.
	\end{align*}
	The relation \(R_i\) corresponds to a clause where exactly \(i\) variables are negated.
	In the following, functions are applied coordinate-wise as in the definition of polymorphisms.
	As \((0,0,0)\notin R_3\) and~\((1,1,1)\notin R_0\), conditions~\labelcref{item:dichotomy-1,item:dichotomy-2} of \cref{thm:dichotomy} are violated.
	As \({r_1=(1,0,0)\in R_3}\) and \(r_2=(0,1,0)\in R_3\), but \({f_\land(r_1,r_2)=(0,0,0)\notin R_3}\), condition~\labelcref{item:dichotomy-3} is violated.
	As~\({r_3=(1,1,0)}\) and~\({r_4=(0,1,1)}\) are in \(R_0\), but \({f_\lor(r_3,r_4)=(1,1,1)\notin R_0}\), condition~\labelcref{item:dichotomy-4} is violated.
	As the tuples \({r_5=(1,0,0)}\),~\({r_6=(0,1,0)}\) and \(r_7=(0,0,1)\) are in \(R_3\), but~\({\majority(r_5,r_6,r_7)=(0,0,0)}\) is not, condition~\labelcref{item:dichotomy-5} is violated.
	Finally,~\({r_8=(0,1,1)}\),~\({r_9=(1,0,1)}\) and~\({r_{10}=(1,1,0)}\) are in~\(R_0\), but \(\minority(r_8,r_9,r_{10})=(0,0,0)\) is not, so condition~\labelcref{item:dichotomy-6} is violated.
	By \cref{thm:dichotomy}, the problem $(0,2)$-3-$\SAT$ is \(\NP\)-complete.
\end{proof}

\begin{lemma}\label{lem:variable-reduction1}
	Let~$0\leq a\leq b\leq2$, \(b\geq1\) and let~$\varphi$ be a formula in~3-CNF.
	There exists an algorithm which, in polynomial time, determines if~$\varphi$ is~$(a,b)$-satisfiable or computes a CNF-formula~$\psi$ such that each clause of~$\psi$ consists of~2 or~3 pairwise different literals and~$\psi$ is~$(a,b)$-satisfiable if and only if~$\varphi$ is~$(a,b)$-satisfiable.
\end{lemma}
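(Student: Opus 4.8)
The plan is to process the clauses of~\(\varphi\) one at a time and, whenever a clause violates the distinctness requirement, replace it by an equivalent collection of clauses on \(2\) or \(3\) distinct literals (possibly introducing fresh variables), or else detect that the clause is already unsatisfiable under the threshold \([a,b]\) and report that \(\varphi\) is not \((a,b)\)-satisfiable. A clause fails distinctness exactly when some literal occurs at least twice among its three slots, and up to renaming there are only three such patterns: (I)~\((\ell,\ell,\ell)\); (II)~\((\ell,\ell,m)\) with \(m\) on a variable different from that of~\(\ell\); and (III)~\((\ell,\ell,\neg\ell)\). In each pattern the number of satisfied slots is a function of at most two underlying variables, so the requirement that the number of satisfied slots lie in~\([a,b]\) collapses to an explicit constraint on one or two variables, which I would tabulate once and for all.

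The two ingredients I need are gadgets that \emph{force} a chosen literal to a fixed truth value and gadgets that express the residual binary constraints arising in pattern~(II), namely ``\(\ell\) and \(m\) not both true'' and ``\(\ell\neq m\)''. Crucially, every clause I output must be evaluated under the \emph{same} threshold~\([a,b]\), so these gadgets must be built separately for each admissible pair \((a,b)\in\{(0,1),(0,2),(1,1),(1,2),(2,2)\}\). Representative constructions are: under~\((1,2)\), where a \(2\)-literal clause means ``at least one true'', the clauses \((r,s)\) and \((r,\neg s)\) with \(s\) fresh force \(r\) true; under~\((0,1)\), where a \(2\)-literal clause means ``not both true'', the clauses \((x,y)\) and \((x,\neg y)\) with \(y\) fresh force \(x\) false; under~\((1,1)\) the clauses \((\ell,y,z)\) and \((y,z)\) with fresh \(y,z\) force \(\ell\) false; under~\((2,2)\) a single clause \((\ell,\neg m)\) already forces \(\ell\) true and \(m\) false; and under~\((0,2)\), where a \(3\)-literal clause is an ordinary disjunction after negating all of its literals, forcing is obtained by the standard \(3\)-SAT constructions. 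The binary constraints are then obtained by padding a forced auxiliary variable into a \(3\)-literal clause: for instance, under~\((0,2)\), forcing a fresh~\(r\) to true and adding \((\ell,m,r)\) encodes exactly ``\(\ell,m\) not both true'', and under~\((1,2)\) combining \((\ell,m)\) with such a padded clause encodes ``\(\ell\neq m\)''.

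With these gadgets in hand the three patterns resolve quickly. Pattern~(I) has satisfied-slot count in \(\{0,3\}\); since \(b\le 2\) this forces \(\ell\) false when \(a=0\) and is outright unsatisfiable when \(a\ge 1\)---this is the only situation producing the ``not \((a,b)\)-satisfiable'' answer. Pattern~(III) has count in \(\{1,2\}\) and is therefore either vacuous (and may be deleted) or forces~\(\ell\) to a fixed value. Pattern~(II) has satisfied-slot count equal to twice the truth value of~\(\ell\) plus the truth value of~\(m\) and, depending on \((a,b)\), either forces both variables, forces one and leaves the other unconstrained, or reduces to one of the two binary constraints above; which case occurs is a finite check in \(a\) and \(b\). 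Every replacement uses only fresh auxiliary variables, so no new repeated literals are created, and each clause spawns \(\Oh(1)\) clauses and variables, giving the claimed polynomial running time.

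The real obstacle is not the enumeration but the uniform-threshold requirement: because all clauses of~\(\psi\) share the single threshold~\([a,b]\), I cannot use the naive device of substituting a forced value and shortening clauses, since substituting a \emph{true} literal would locally shift the effective threshold to \([a-1,b-1]\). Each forced value must therefore be realized by a genuine gadget, and I expect the verification that such gadgets exist for \emph{every} admissible \((a,b)\) to be the main work; the thresholds with \(a=0\), where short clauses tend to become vacuous and forcing is least obvious, are the most delicate and deserve explicit constructions. Correctness of the reduction then follows pattern-by-pattern: each replacement is chosen to be logically equivalent to the reduced constraint on the original variables, and since all auxiliary variables are fresh and appear only inside their gadgets, any \((a,b)\)-satisfying assignment of~\(\psi\) restricts to one of~\(\varphi\) and, conversely, every \((a,b)\)-satisfying assignment of~\(\varphi\) extends to one of~\(\psi\).
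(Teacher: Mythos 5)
Your proposal is correct in substance but takes a genuinely different route from the paper. The paper works by \emph{global substitution}: it repeatedly applies a fixed table of rewrite rules of the form $A/0$, $A/1$ or $A/\overline{B}$ (replacing a literal by a constant or by another literal throughout the whole formula), keeps the resulting constants inside the clauses, and has further rules that resolve clauses containing a constant $1$ by forcing additional assignments --- this is exactly how it sidesteps the threshold-shifting issue you identify, since a satisfied constant is never silently dropped but instead triggers a rule that re-expresses the residual constraint at the same threshold $[a,b]$. Each rule strictly shrinks the formula, no fresh variables are introduced, and termination in polynomial time is immediate. You instead keep every clause at full width and realize each forced value or residual binary constraint (``not both true'', ``exactly one true'') by a constant-size gadget on fresh auxiliary variables, so that correctness is entirely local to each clause and no global propagation is needed. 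I checked your representative gadgets for the five admissible thresholds $(0,1),(0,2),(1,1),(1,2),(2,2)$ and they are sound; the price of your approach is precisely the verification you anticipate, namely that a forcing gadget exists for every threshold and every required polarity (the $(0,2)$ case genuinely needs the four-clause $3$-SAT-style forcing construction), plus a slightly larger output formula. Two minor points: you implicitly assume every input clause has exactly three literal slots, whereas the paper's table also handles one- and two-literal input clauses (which is why the lemma's conclusion allows clauses of size $2$); and you should state explicitly that each forcing gadget is itself $(a,b)$-satisfiable under the intended forced value, so that satisfying assignments of $\varphi$ really do extend to $\psi$. Neither point is a substantive obstacle.
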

\begin{proof}
	\begin{figure}[!t]
		\centering
		\renewcommand{\arraystretch}{1.03}
		\begin{tabular}{c|cc|cc}
			Clause & \multicolumn{2}{c|}{Output or Operation ($b=1$)} & \multicolumn{2}{c}{Output or Operation ($b=2$)}\\\hline
			\multirow{2}{*}{$A\lor A\lor A$} & & \multicolumn{1}{c}{$a=0$} & $a>0$\\
			& & \multicolumn{1}{c}{$A/0$} & unsatisfiable & \\\hline
			\multirow{2}{*}{$A\lor A\lor B$} & $a=0$ & $a=1$ & $a<2$ & $a=2$\\
			& $A/0$ & $A/0$ and $B/1$ & $A/\overline{B}$ & $A/1$ and $B/0$\\\hline
			$A\lor\overline{A}\lor X$ & \multicolumn{4}{c}{replace by $1\lor X$}\\\hline
			$0\lor X\lor Y$ & \multicolumn{4}{c}{replace by $X\lor Y$}\\\hline
			\multirow{2}{*}{$1\lor A\lor A$} & & \multicolumn{1}{c}{$a<2$} & $a=2$\\
			& & \multicolumn{1}{c}{$A/0$} & unsatisfiable & \\\hline
			$1\lor X\lor Y$ & \multicolumn{2}{c|}{$X/0$ and $Y/0$} & \multicolumn{2}{c}{$X/\overline{Y}$}\\\hline
			\multirow{2}{*}{$A\lor A$} & \multicolumn{2}{c|}{\multirow{2}{*}{$A/0$}} & $a=0$ & $a>0$\\
			& & & remove & $A/1$\\\hline
			$A\lor \overline{A}$ & \multicolumn{4}{c}{replace by 1
			}\\\hline
			$0\lor X$ & \multicolumn{4}{c}{replace by $X$}\\\hline
			\multirow{2}{*}{$1\lor X$} & \multicolumn{2}{c|}{\multirow{2}{*}{$X/0$}} & $a<2$ & $a=2$\\
			& & & remove & $X/1$\\\hline
			\multirow{2}{*}{$X$} & $a=0$ & \multicolumn{2}{c}{$a=1$} & $a=2$\\
			& remove & \multicolumn{2}{c}{$X/1$} & unsatisfiable\\
		\end{tabular}
		\caption{Table of the reduction rules used in \cref{lem:variable-reduction1} to simplify Boolean formulas}
		\label{fig:variable-reduction-1}
	\end{figure}
	The algorithm proceeds by iteratively replacing literals or clauses.
	In each step, the algorithm applies one of the rules displayed in \cref{fig:variable-reduction-1} to simplify the current formula, replacing a clause with a smaller one or removing it entirely while assigning values to variables.
	Here,~$A$ and~$B$ denote different literals,~$C$ denotes a constant and~$X$ and~$Y$ are arbitrary literals or constants.
	The rules are only given up to permutation of their elements, i.e.~$A\lor\overline{A}\lor A$ is treated like~$A\lor A\lor \overline{A}$.
	The operation~$A/X$ replaces the literal~$A$ by~$X$ and the negated literal~$\overline{A}$ by~$\overline{X}$.
	The operation~$C/A$ is defined to be~$A/C$.
	Both~$1/1$ and~$0/0$ induce no operation and the operations~$1/0$ and~$0/1$ cause the algorithm to output unsatisfiable.
	Similarly, the operation~$A/\overline{A}$ causes the algorithm to output unsatisfiable.
	
	The correctness of the presented rules is immediate.
	The first five rules exhaustively treat clauses of length three.
	The remaining rules treat the clause with length at most two.
	In each step, a clause is removed or a literal is removed from a clause, so the algorithm runs in polynomial time with respect to~$|\varphi|$.
	If the resulting formula is empty, the algorithm outputs satisfiable.
	Otherwise, it outputs the resulting formula.
\end{proof}

\begin{lemma}\label{lem:variable-reduction2}
	Let~$\varphi$ be a formula in~3-CNF.
	There exists an algorithm which, in polynomial time, determines if~$\varphi$ is~$(2,3)$-satisfiable or computes a CNF-formula~$\psi$ such that each clause of~$\psi$ consists of~3 pairwise different literals and~$\psi$ is~$(2,3)$-satisfiable if and only if~$\varphi$ is~$(2,3)$-satisfiable.
\end{lemma}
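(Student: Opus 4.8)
The plan is to mirror the rule-based simplification of \cref{lem:variable-reduction1}: I would process \(\varphi\) by repeatedly applying local reduction rules, collected in a table analogous to \cref{fig:variable-reduction-1}, each of which removes a clause, shrinks a clause, fixes the value of a variable, or reports unsatisfiability. The decisive structural feature to exploit is that, because \(b=3\) and every clause has at most three literals, the upper bound \(\le b\) is never binding on a full clause; hence, unlike the \(b\le2\) cases, every condition I manipulate is a pure \emph{lower-bound} ``at least two satisfied'' constraint. A clause of three pairwise different literals is already in the desired form and is left untouched, so all the work lies in eliminating repeated and complementary literals and then propagating the assignments this forces.

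The main rules I would record are the following. If a clause contains the same literal \(L\) twice, then \(L\) must be true, since otherwise at most one literal of the clause is satisfied; so I set \(L/1\) and delete the clause. If a clause is \(A\lor\overline{A}\lor C\) with \(C\) on a third variable, the complementary pair contributes exactly one satisfied literal, so \(C\) must be true; I set \(C/1\) and delete the clause. Fixing a literal \(L\) to true then triggers a propagation step over the whole formula: an occurrence of \(\overline{L}\) becomes false and is dropped, turning a three-literal clause into a ``both survivors true'' condition that forces its two remaining literals (a conflict here being reported as unsatisfiable) and lets me delete the clause; an occurrence of \(L\) becomes satisfied and collapses a three-literal clause \(\{L,X,Y\}\) to the condition ``at least one of \(X,Y\) is true.'' Degenerate one-literal and empty conditions are handled directly as a forced assignment, a trivial deletion, or a report of unsatisfiability.

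The hard part is the last condition produced by propagation: ``at least one of \(X,Y\)'' is a genuine two-literal disjunction, and here the \(b\le2\) reasoning of \cref{lem:variable-reduction1} fails, because with the upper bound slack this condition forces nothing, cannot be dropped, and is not itself a clause of three distinct literals. I would resolve this by padding with a fresh variable \(w\), replacing the condition by the clause \(X\lor Y\lor w\). Since \(w\) occurs nowhere else, under the \((2,3)\)-constraint one can always choose \(w\) so that the number of satisfied literals lands in \([2,3]\) exactly when at least one of \(X,Y\) is already satisfied; thus \(X\lor Y\lor w\) enforces precisely ``at least one of \(X,Y\)'' on the original variables while being a clause of three pairwise different literals. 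When \(X\) and \(Y\) coincide or are complementary the condition instead collapses to a forced assignment or to a tautology, and no padding is needed.

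Finally I would verify correctness and running time. Each rule manifestly preserves \((2,3)\)-satisfiability, the only nonroutine check being the fresh-variable padding above, and on termination every surviving clause, whether an untouched original clause or a padded one, consists of three pairwise different literals, which is the claimed \(\psi\). For termination and polynomiality, each application removes a clause, strictly shortens a clause, or permanently assigns a variable, and the number of fresh variables introduced is at most the number of clauses; hence the whole procedure runs in polynomial time in \(|\varphi|\). I expect the only real subtlety to be isolating the slack ``at least one of two'' constraint and arguing that the fresh-variable padding re-expresses it faithfully as a three-literal clause.
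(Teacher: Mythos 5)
Your proposal is correct and follows essentially the same route as the paper: the same local reduction rules (force a repeated literal to be true, force the remaining literal of a clause containing a complementary pair, force both survivors of a clause containing a falsified literal, delete clauses that are already guaranteed two satisfied literals), followed by re-expressing the residual ``at least one of two'' constraints as genuine three-literal clauses by padding with a fresh variable. The only cosmetic difference is that the paper leaves the constants in place and, after exhaustive reduction, substitutes a single shared fresh variable $Z$ for every remaining occurrence of the constant $1$, whereas you introduce one fresh padding variable per affected clause; both variants are sound.
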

\begin{proof}
	\begin{figure}[!t]
		\centering
		\begin{tabular}{c|c}
			Clause & Output or Operation\\\hline
			~$A\lor A\lor X$ &~$A/1$\\
			~$A\lor\overline{A}\lor X$ &~$X/1$\\
			~$0\lor X\lor Y$ &~$X/1$ and~$Y/1$\\
			~$1\lor 1\lor X$ & remove\\
		\end{tabular}
		\caption{Table of the reduction rules used in \cref{lem:variable-reduction2} to simplify Boolean formulas}
		\label{fig:variable-reduction-2}
	\end{figure}
	\Cref{fig:variable-reduction-2} gives a list of reduction rules that are the first phase of the algorithm.
	The notation is as in the proof of \cref{lem:variable-reduction1}.
	If the resulting formula is empty, the algorithm outputs satisfiable.
	Otherwise, if it has not already been found that~$\varphi$ is not~$(2,3)$-satisfiable, the resulting formula contains the constant~1 at most once in every clause.
	The algorithm then replaces every occurrence of the constant~1 with a new variable~$Z$ to obtain \(\psi\).
\end{proof}

We are now ready to prove hardness of the cases where~$\alpha=3$.

\begin{theorem}\label{thm:rcp-hardness}
	Let \((a,b)\in\{0,1,2,3\}^2\setminus\{(0,0),(0,3),(3,3)\}\).
	Then \((a,b,3)\text{-}\RCP\) is \NP-complete.
\end{theorem}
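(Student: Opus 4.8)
The plan is to establish both membership in $\NP$ and $\NP$-hardness. Membership is immediate: an $(s,t)$-path $P$ is a polynomial-size certificate, and since $\C(D)$ is computable in polynomial time, the bounds $a\le|V(P)\cap C|\le b$ can be checked over all $C\in\C(D)$ in polynomial time. For hardness I would reduce, in polynomial time, from $(a,b)$-3-SAT, which is $\NP$-complete for every $(a,b)$ in the stated set by Schaefer's theorem. Using \cref{lem:variable-reduction1,lem:variable-reduction2} (and the standard normalization of ordinary \trSAT\ for the remaining case $(1,3)$), I may assume the input formula $\psi$ has only clauses consisting of $2$ or $3$ pairwise distinct literals; in particular no clause contains a variable together with its negation, and no literal is repeated within a clause.

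The heart of the reduction is to translate ``number of satisfied literals in a clause'' into ``number of path vertices in a color class.'' To that end I would introduce, for each clause $c_j$, a color class $C_j$ whose active vertices are one vertex per literal occurrence of $c_j$; padding $C_j$ with sources (vertices with no in-arcs, hence unreachable from $s$) makes $|C_j|=3$ without changing $|V(P)\cap C_j|$ for any $(s,t)$-path $P$. The assignment itself would be encoded by a chain of variable gadgets traversed in series from $s$ to $t$: the gadget for $x_i$ offers two internally disjoint directed lanes from its entry to its exit, a \emph{true} lane threading exactly the positive-occurrence vertices of $x_i$ and a \emph{false} lane threading exactly its negative-occurrence vertices. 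If the path is forced to traverse one full lane per gadget, then the literals it covers are precisely the true literals of a consistent assignment, so $|V(P)\cap C_j|$ equals the number of satisfied literals of $c_j$, and the constraints $a\le|V(P)\cap C_j|\le b$ become exactly the $(a,b)$-satisfaction condition.

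The main obstacle is that an SMD is dense: between any two vertices in distinct color classes an arc must be present, so the gadgets live inside an almost-complete digraph and one must orient every such ``extra'' arc so as to (i) forbid shortcuts that skip occurrence vertices within a lane, and (ii) forbid the path from mixing the two lanes of a gadget, which would encode an inconsistent truth value. I would fix a linear order placing $s$ first, the gadgets in series, and $t$ last, keep forward only the designated lane and series arcs, and orient every remaining arc backward with respect to this order; the two lanes of each gadget are then arranged so that any forward route from entry to exit reaching the other lane is forced either to revisit a vertex or to miss the exit. Two occurrence vertices of the same clause lie in the common class $C_j$ and hence have no arc between them, whereas the occurrence vertices inside a single gadget all belong to distinct clauses and hence to distinct classes, so lanes can be threaded cleanly and the cross-lane interference stays localized; I expect ruling out lane-mixing inside this dense orientation, that is step~(ii), to be the technically most delicate point.

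A final subtlety, addressed in the verification, is that the uniform bounds $(a,b)$ constrain the auxiliary color classes of $s$, $t$, and the gadget entry and exit vertices as well. For $a\le1$ a single structural vertex per class together with unreachable padding suffices, since any valid path visits it exactly once; for $a=2$ (the cases $(2,2)$ and $(2,3)$) I would instead let each structural class contribute two vertices that every $s$--$t$ path visits, so that $|V(P)\cap C|=2$ automatically and the padding source keeps the size at $3$. With these conventions the correctness proof splits into the two standard directions: an $(a,b)$-satisfying assignment yields the path following the corresponding lanes, whence each $C_j$ is covered the right number of times; and conversely any constraint-respecting $(s,t)$-path must, by the orientation, select one full lane per gadget, inducing a consistent assignment whose satisfied-literal counts obey the $(a,b)$ bounds. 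Since the construction has size polynomial in $|\psi|$, this yields the claimed $\NP$-completeness.
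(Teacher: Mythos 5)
Your overall architecture (membership in $\NP$, reduction from $(a,b)$-3-SAT after the preprocessing of \cref{lem:variable-reduction1,lem:variable-reduction2}, serial variable gadgets with two lanes threading the positive resp.\ negative occurrence vertices, clause color classes translating satisfied-literal counts into $|V(P)\cap C_j|$, and padding with unreachable vertices) coincides with the paper's. The gap is in the one step you yourself flag as delicate: you give no working mechanism forcing a constraint-respecting $(s,t)$-path to traverse exactly one full lane per gadget, and the mechanism you sketch --- orient all non-lane arcs backward with respect to a global linear order --- cannot work. In a semicomplete multipartite digraph the occurrence vertices of the two lanes of one gadget lie in distinct (clause) color classes, so between every such pair an arc must be present in \emph{some} direction; whichever direction you choose, one of the two lane-switching moves survives. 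Concretely, if all cross arcs go from the true lane to the false lane, then $s^i y_0^i \dots y_j^i z_k^i \dots z_{q_i+1}^i t^i$ is an admissible forward route covering positive and negative occurrences of $X_i$ simultaneously; if they go the other way, the symmetric mixed route appears. No orientation of a dense digraph eliminates both.

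The paper closes this gap with two ideas you would need to supply. For $b\le 2$ it uses the color-class constraint itself as the exclusion mechanism: the first vertex $y_0^i$ of the true lane and the last vertex $z_{q_i+1}^i$ of the false lane are placed in a common color class (padded, when $a=2$, by a vertex $w_i$ lying on a forced prefix of every $(s,t)$-path), so the upper bound $b$ forbids visiting both; since the gadget's arcs force any path that touches some $z_j^i$ to continue along $z_j^i\dots z_{q_i+1}^i t^i$, and force any path avoiding the $z$-lane to be exactly $s^i y_0^i\dots y_{p_i}^i t^i$, one-lane traversal follows. For $b=3$ this trick is unavailable (the upper bound is vacuous against classes of size $3$), and the paper instead switches to the W-gadget of \cite{DBLP:journals/dam/Bang-JensenMS13} (\cref{fig:wgadget}), whose internal structure --- the classes $\{s^i,t^i\}$ and $\{a^i,b^i\}$ together with the depicted arcs --- guarantees that no $(s,t)$-path meets both the $T$- and the $F$-clique. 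Your proposal contains neither idea and does not distinguish $b=3$ from $b\le 2$, so as written the backward direction of the reduction (path $\Rightarrow$ consistent assignment) is unproven.
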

\begin{proof}
	The witness-characterization of~\(\NP\) implies membership of~$(a,b,\alpha)\text{-}\RCP$ in~$\NP$.
	The reductions presented here are based on the constructions from~\cite{DBLP:journals/dam/Bang-JensenMS13,DBLP:journals/dam/Bang-JensenWY25}.
	A notable difference is that the instances constructed here satisfy~$|C|=3$ for all~$C\in\C(D)$ whereas the instances of~\cite{DBLP:journals/dam/Bang-JensenMS13,DBLP:journals/dam/Bang-JensenWY25} have color classes of arbitrary size.
	Our constructed graphs will only satisfy~$\alpha(D)\leq3$ at first, so there may initially be smaller color classes.
	However, this is not a problem: by iteratively adding vertices which are not reachable from~$s$, i.e.~vertices with no incoming arcs, all color classes can be padded to size~3.
	Denote the result of padding the digraph~$D$ by~$D^+$.
	
	The key elements of the reductions are the B-gadget~\cite{DBLP:journals/dam/Bang-JensenWY25} and the W-gadget~\cite{DBLP:journals/dam/Bang-JensenMS13}, which correspond to the cases~$b\leq 2$ and~$b=3$, respectively.
	These gadgets have the property that any~$(s,t)$-path can contain at most one of two subpaths of a certain kind while satisfying the size constraints of~$(a,b,3)\text{-}\RCP$.
	This allows the modeling of truth assignments by identifying one path with 0 and the other with 1 while introducing one gadget for each variable.
	First, fix~\((a,b)\) with \(0\leq a\leq b\leq2\) and \(b\geq1\).
	
	The B-gadget~$B[s,t,n_1,n_2]$ has vertex set~$V=\{s,t,y_0,\dots,y_{n_1},z_1,\dots,z_{n_2+1}\}$ and the following arcs:~$sy_0\dots y_{n_1}t$ and~$sz_1\dots z_{n_2+1}t$ are paths such that every vertex on the path dominates all vertices preceding it except for its direct predecessor and~$\{{y_0,\dots, y_{n_1}\}\rightarrow\{z_1,\dots,z_{n_2+1}}\}$.
	The vertices of the gadgets will be partitioned differently depending on~$(a,b)$, so some of the arcs will be removed later.
	As an example, the B-gadget \(B[s,t,1,2]\) is visualized in \cref{fig:bgadget}.
	
	\begin{figure}
		\centering
		\includegraphics[scale=0.8]{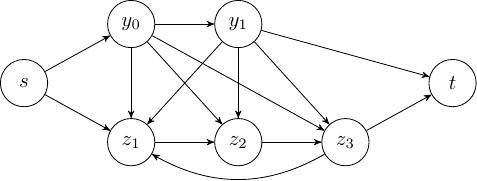}
		\caption{The~$B[s,t,1,2]$-gadget as defined in~\cite{DBLP:journals/dam/Bang-JensenWY25}.
			For visual clarity, the incoming arcs of \(s\) and the outgoing arcs of \(t\) are not drawn.}
		\label{fig:bgadget}
	\end{figure}
	
	We now specify the reduction.
	Let~$\varphi$ be a formula in~3-CNF.
	Apply \cref{lem:variable-reduction1} to determine if~$\varphi$ is~$(a,b)$-satisfiable or obtain the formula~$\psi$.
	In the first case, map~$\varphi$ to some fixed yes-instance or no-instance depending on whether it is satisfiable or not.
	Otherwise, let~$X_1,\dots,X_n$ be the variables of~$\psi$ and~$C_1,\dots,C_m$ be the clauses of~$\psi$.
	Let~$X_i$ occur~$p_i$ times as a literal and~$q_i$ times as a negated literal.
	For all~$i$ such that~$1\leq i\leq n$, we associate with~$X_i$ the B-gadget~$B^i\coloneqq B[s^i,t^i,p_i,q_i]$ and give each vertex in this gadget the superscript~$i$ to uniquely identify all vertices.
	The~$j$-th occurrence of~$X_i$ as a positive (resp. negative) literal corresponds to~$y_j^i$ (resp.~$z_j^i$).
	Denote by~$V_i$ the set of vertices associated with the literals of the clause~$C_i$.
	Identify~$t^i$ with~$s^{i+1}$ and add arcs such that~$B^j\Rightarrow B^i$ for all~$i$ and~$j$ such that~$i<j$.
	If~$b=2$, also add the vertices~$w_1,\dots,w_{2n+1}$ such that~$N^+(w_i)=\{w_{i+1}\}$ for all~$i$ such that~$1\leq i<2n+1$ and~$N^+(w_{2n+1})=\{s_1\}$.
	All vertices not in~$N^+(w_i)$ dominate~$w_i$.
	The color classes are defined as follows:
	\begin{itemize}
		\item~$V_i$ is a color class for all~$1\leq i\leq m$.
		\item~$\{y_0^i,z_{q_i+1}^i,w_i\}$ is a color class for all~$1\leq i\leq n$ (if~$b\neq2$, remove \(w_i\) from the set).
		\item~$\{s_i,w_{n+i}\}$ is a color class for all~$1\leq i\leq n$.
		The same remark as above applies.
		\item~$\{t_n,w_{2n+1}\}$ is a color class.
	\end{itemize}
	Remove all arcs between vertices in the same color class.
	Call the resulting SMD~$D$.
	Set~$s\coloneqq w_1$ if~$b=2$ and~$s\coloneqq s_1$ otherwise.
	Set~$t\coloneqq t_n$.
	We map~$\varphi$ to~$(D^+,s,t)$.
	We now prove correctness of our reduction.
	
	``$\Rightarrow$'': Let~$P$ be an~$(s,t)$-path witnessing that~$(D^+,s,t)$ is a yes-instance.
	The vertex~$t^i$ separates~$B^i$ from~$B^{i+1}$, so~$B^1,\dots,B^n$ are traversed in this order in \(P\).
	In particular, \(P\) uses all of the vertices~$s_1,\dots,s_n$.
	As \(P\) satisfies the upper bound~$b$, it can only contain one element of~$\{y^i_0,z^i_{q_i+1}\}$ for any \(1\leq i\leq n\).
	If~$b=1$, this is immediate as these vertices are in the same color class.
	If~$b=2$, \(P\) starts with the subpath~$w_1\dots w_{2n+1}s_1$ and~$w_i$ is in the same color class as~$y^i_0$ and~$z^i_{q_i+1}$.
	Therefore, the size constraint of~2 for~$\{y^i_0,z^i_{q_i+1},w_i\}$ is effectively a size constraint of~1 for~$\{y^i_0,z^i_{q_i+1}\}$.
	
	This implies that \(P\) contains either the subpath~$s^iy^i_0\dots y^i_{p_i}t^i$ or~$s^iz^i_1\dots z^i_{q_i+1}t^i$.
	The vertex~$s^i$ has only two positive neighbors,~$y^i_0$ and~$z^i_1$.
	If~$P$ contains~$z^i_j$, then~$z^i_j\dots z^i_{q_i+1}t^i$ is a subpath of~$P$ as~${N^+(z^i_j)\subseteq\{z^i_1,\dots,z^i_{j+1}\}}$ for all~$1\leq j\leq q_i$.
	This makes using an arc to a previous~$z$-vertex impossible when trying to reach~$t^i$.
	Additionally,~$P$ cannot contain~$y^i_0$ and~$z^i_j$ for any~$j$ such that~$1\leq j\leq q_i+1$ as~$y^i_0$ and~$z^i_{q_i+1}$ are mutually exclusive.
	Therefore,~$P$ must contain the subpath~$s^iy_0^i\dots y^i_{p_i}t^i$ if it does not contain~$s^iz^i_1\dots z^i_{q_i+1}t^i$ as this is the only path from~$s^i$ to~$t^i$ that does not use any of the~$z^i_j$.
	
	We construct an \((a,b)\)-satisfying assignment \(\sigma:\{X_1,\dots,X_n\}\to\{0,1\}\) in the following way:
	Set~${\sigma(X_i)\coloneqq1}$ if~$s^iy_0\dots y^i_{p_i}t^i$ is a subpath of~$P$, else set~$\sigma(X_i)\coloneqq0$.
	As the literals associated to the clause~$C_i$ form a color class in~$D^+$, this assignment sets at least~$a$ and at most~$b$ literals to~$1$ for each clause.
	
	``$\Leftarrow$'': Let $\sigma$ be an \((a,b)\)-satisfying assignment of \(\psi\).
	Let~$P_0=w_1\dots w_{2n+1}s_1$ if~$b=2$ and~$P_0=s_1$ if~$b\neq2$.
	For each~$1\leq i\leq n$, set~$P_i\coloneqq y_0\dots y^i_{p_i}t^i$ if~$\sigma(X_i)=1$ and~$P_i\coloneqq z^i_1\dots z^i_{q_i+1}t^i$ if~$\sigma(X_i)=0$.
	These paths exist in \(D^+\) as the literals in each clause are pairwise different so that none of the arcs of the paths are removed.
	Then~$P_0\dots P_n$ is an~$(s,t)$-path satisfying the cardinality constraints imposed by the parameters~$(a,b)$.
	
	This leaves the case~$b=3$ and~$a\in\{1,2\}$, both of which use the \(W\)-gadget.
	The prototypical~$W$-gadget is visualized in \cref{fig:wgadget}.
	The~$W$-gadget~$W[x,y,n_1,n_2]$ is then obtained from \cref{fig:wgadget} by replacing~$s$ with~$x$,~$t$ with~$y$,~$T$ with a clique~$y_1,\dots,y_{n_1+1}$ and~$F$ with a clique~$z_1,\dots,z_{n_2+1}$.
	Here, a clique is a set of vertices where all possible arcs between any pair of different vertices exist.
	The arcs between the cliques and the remaining vertices are given by the arcs of \cref{fig:wgadget}.
	
	\begin{figure}
		\centering
		\includegraphics[scale=0.8]{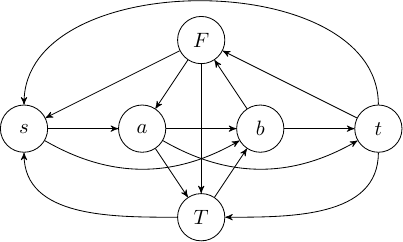}
		\caption{The~$W$-gadget as defined in~\cite{DBLP:journals/dam/Bang-JensenMS13}.}
		\label{fig:wgadget}
	\end{figure}
	
	Let~$\varphi$ be an instance of~$(a,b)$-3-$\SAT$.
	If~$a=1$, set~$\psi\coloneqq\varphi$.
	If~$a=2$, apply \cref{lem:variable-reduction2} to determine if~$\varphi$ is~$(2,3)$-satisfiable or obtain the formula~$\psi$.
	In the first case, map~$\varphi$ to some fixed yes-instance or no-instance depending on whether it is satisfiable or not.
	Let~$X_1,\dots,X_n$ and~$C_1,\dots,C_m$ denote the variables and clauses of~$\psi$.
	Let~$p_i$ be the number of positive occurrences of~$X_i$ and~$q_i$ the number of its negative occurrences.
	Introduce a~$W$-gadget~$W^i[s^i,t^i,p_i,q_i]$ for each~$1\leq i\leq n$.
	Give each vertex in~$W^i$ the superscript~$i$ to uniquely identify it.
	The~$j$-th positive (resp. negative) occurrence of~$X_i$ is associated with~$y^i_j$ (resp.~$z^i_j$).
	The vertices~\(y^i_{p_i+1}\) and~\(z^i_{q_i+1}\) are not associated to any literal.
	They ensure that the paths~\(s^ia^iy^i_1\dots y^i_{p_i+1}b^it^i\) and~\(s^ib^iz^i_1\dots z^i_{p_i+1}a^it^i\) exist even if \(p_i=0\) or \(q_i=0\).
	Denote the vertices associated to the literals of clause~$C_i$ by~$V_i$.
	Add arcs such that~$W_j\Rightarrow W_i$ and add the arc~$t_is_{i+1}$ for all~$i$ and~$j$ such that~$1\leq i<j\leq n$.
	The color classes are~$\{s^i,t^i\}$,~$\{a^i,b^i\}$,~$\{y^i_{q_i+1},z^i_{p_i+1}\}$ and~$V_i$ for all~$i$ such that~$1\leq i\leq n$.
	Remove arcs between vertices of the same color and call the resulting SMD~$D$.
	We map~$\varphi$ to~$(D^+,s^1,t^n)$.
	
	``$\Rightarrow$'': Let~$P$ be an~$(s,t)$-path witnessing that~$(D^+,s^1,t^n)$ is a yes-instance.
	The crucial property of the~$W$-gadget is that any~$(s,t)$-path cannot contain both~$F$ and~$T$.
	This was proven in~\cite[Theorem~3.1]{DBLP:journals/dam/Bang-JensenMS13} but can also be seen by inspecting \cref{fig:wgadget}.
	Replacing~$F$ and~$T$ with cliques does not change this fact.
	Set~$\sigma(X_i)\coloneq1$ if~$\{y^i_1,\dots,y^i_{p_i}\}\cap V(P)\neq\emptyset$ and~$\sigma(X_i)=0$ otherwise.
	As~$P$ contains at least~$a$ vertices from each~$V_i$, this assignment satisfies at least~$a$ variables from each clause, so~$\varphi$ is~$(a,b)$-satisfiable.
	
	``$\Leftarrow$'': Let~$\sigma$ be an assignment witnessing that~$\varphi$ is~$(a,b)$-satisfiable.
	Let~$Q_i$ be a quasi-Hamiltonian path in~$D^+[y^i_1,\dots,y^i_{p_i}]$ if~$\sigma(X_i)=1$ and a quasi-Hamiltonian path in~$D^+[z^i_1,\dots,z^i_{q_i}]$ if~$\sigma(X_i)=0$.
	Set~${P_i=s^ia^iQ_ib^it^i}$ if~$\sigma(X_i)=1$ and~$P_i=s^ib^iQ_ia^it^i$ otherwise.
	Then~$P=P_1\dots P_n$ witnesses that~$(D^+,s^1,t^n)$ is a yes-instance as~$\sigma$ satisfies at least~$a$ variables in every clause.
	In the case \(a=2\), the literals in each clause are pairwise different, so that each color class \(V_i\) will be hit twice.
\end{proof}

\begin{corollary}\label{cor:ccp-hardness}
	Let~$(a,b,\alpha)\in\Z^3$ such that~$\alpha\geq3$,~$(a,b)\notin\{(0,0),(0,\alpha),{(\alpha,\alpha)}\}$ and~$0\leq a\leq b\leq \alpha$.
	Then~$(a,b,\alpha)\text{-}\RCP$ is \NP-complete.
\end{corollary}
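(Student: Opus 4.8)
The plan is to prove \cref{cor:ccp-hardness} by induction on $\alpha$. The base case $\alpha=3$ is exactly \cref{thm:rcp-hardness}: for $\alpha=3$ the forbidden set $\{(0,0),(0,\alpha),(\alpha,\alpha)\}$ is $\{(0,0),(0,3),(3,3)\}$, and restricting to $0\le a\le b\le 3$ these are the only pairs excluded there. Membership in $\NP$ is witnessed by the path itself, so I only argue hardness. For the inductive step I fix a target triple $(a,b,\alpha)$ with $\alpha\ge4$ and $(a,b)\notin\{(0,0),(0,\alpha),(\alpha,\alpha)\}$, and I exhibit a polynomial reduction to $(a,b,\alpha)\text{-}\RCP$ from some triple $(a',b',\alpha-1)$ that is $\NP$-hard by the induction hypothesis. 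The whole difficulty is to choose $(a',b')$ and realise the reduction while keeping every color class of size exactly $\alpha$.

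I would use two ways of turning an SMD with classes of size $\alpha-1$ into one with classes of size $\alpha$, each adding a single vertex to every class. In the \emph{dead-vertex} padding I add to each class one vertex of in-degree $0$ (all its cross-class arcs oriented outward, as in the padding remark of \cref{sec:covering-path}); such a vertex lies on no $(s,t)$-path, so a target path is exactly an $(s,t)$-path of the original digraph with unchanged class intersections. Since each class now has size $\alpha$ but only $\alpha-1$ usable vertices, the target bounds $(a,b)$ act on the original classes as $(a,\min\{b,\alpha-1\})$; hence this is a reduction from $(a,\min\{b,\alpha-1\},\alpha-1)\text{-}\RCP$. In the \emph{forced-vertex} padding I introduce one new vertex $v_i$ per class $C_i$, chain them as $v_1\to v_2\to\cdots\to v_c\to s$, and take $v_1$ as the new start vertex; orienting the remaining cross-class arcs suitably, every $(v_1,t)$-path must begin with the forced prefix $v_1\cdots v_c$ and only then enter the original digraph at $s$. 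Each class count thus increases by exactly $1$, so the target bounds $(a,b)$ act on the original classes as $(\max\{a-1,0\},b-1)$, giving a reduction from $(\max\{a-1,0\},b-1,\alpha-1)\text{-}\RCP$.

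It then remains to check that for every admissible target at least one of the two source pairs $(a,\min\{b,\alpha-1\})$ and $(\max\{a-1,0\},b-1)$ is an $\NP$-hard case for $\alpha-1$, i.e.\ avoids the non-hard set $\{(0,0),(0,\alpha-1),(\alpha-1,\alpha-1)\}$. I would argue this by contradiction: if both source pairs lay in the non-hard set, a short case distinction forces either the target to equal one of the excluded pairs $(0,0),(0,\alpha),(\alpha,\alpha)$ or $\alpha\le2$. For instance, $(a,\min\{b,\alpha-1\})=(0,\alpha-1)$ together with $(\max\{a-1,0\},b-1)=(0,0)$ would require $b\ge\alpha-1$ and $b=1$, impossible for $\alpha\ge4$; the remaining combinations are equally quick. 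This resolves all admissible targets, and in particular the single genuinely delicate target $(0,\alpha-1,\alpha)$, which only the forced-vertex padding handles, reducing it from the hard triple $(0,\alpha-2,\alpha-1)$.

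The main obstacle is the forced-vertex gadget. One must check that the orientation yields a genuine semicomplete multipartite digraph and that the prefix $v_1\cdots v_c$ is truly unavoidable; the clean way is to orient every non-consecutive pair $v_i,v_j$ backward and every arc between a $v_i$ and an original vertex into $v_i$, so that the only unused out-neighbour of each $v_i$ is $v_{i+1}$ (respectively $s$ for $v_c$), the extra out-arcs all pointing to already-visited vertices. A small bookkeeping point, to be handled by ordering the class of $s$ first, is that $v_c$ must not share a color class with $s$ so that the arc $v_c\to s$ exists. Granting this, the class counts shift by exactly one, and the boundary clamps $\max\{a-1,0\}$ (when $a=0$) and $\min\{b,\alpha-1\}$ (when $b=\alpha$) are exactly what make the counting---and hence the whole induction---go through.
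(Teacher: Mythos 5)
Your proposal is correct and follows essentially the same route as the paper: induction on $\alpha$ with \cref{thm:rcp-hardness} as the base case, the unreachable-vertex padding giving the reduction $(a,b,\alpha-1)\to(a,b,\alpha)$, and the forced-prefix gadget giving $(a-1,b-1,\alpha-1)\to(a,b,\alpha)$, with the same two boundary targets (your $(0,\alpha-1,\alpha)$ and $(1,\alpha,\alpha)$, the paper's $(0,\alpha,\alpha+1)$ and $(1,\alpha+1,\alpha+1)$) handled by exactly the reductions you describe. Your $\min/\max$-clamped formulation and explicit exhaustive case check are just a more systematic writeup of the paper's argument.
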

\begin{proof}
	The witness-characterization of~\(\NP\) implies membership of~$(a,b,\alpha)\text{-}\RCP$ in~$\NP$.
	The hardness for~$\alpha=3$ follows from \cref{thm:rcp-hardness}.
	We inductively prove hardness for~$\alpha+1>3$ assuming the statement for~$\alpha$.
	By padding all color classes with unreachable vertices,~$(a,b,\alpha)\text{-}\RCP$ reduces to~$(a,b,\alpha+1)\text{-}\RCP$.
	Additionally, changing the start vertex to a new vertex~$s'$ and forcing every~$(s',t)$-path to begin with a path covering one vertex from each color class whose final vertex can then only go to~$s$ shows that~$(a,b,\alpha)\text{-}\RCP$ reduces to~$(a+1,b+1,\alpha+1)\text{-}\RCP$.
	This is similar to how paths are prefixed in the first part of the proof of \cref{thm:rcp-hardness}.
	
	This leaves the cases~$(0,\alpha,\alpha+1)$ and~$(1,\alpha+1,\alpha+1)$.
	In the first case, the starting vertex of an instance of \((0,\alpha-1,\alpha)\text{-}\RCP\) may be modified such that one new vertex from each color class is hit before reaching the original start vertex to reduce \((0,\alpha-1,\alpha)\text{-}\RCP\) to \((0,\alpha,\alpha+1)\text{-}\RCP\).
	In the second case, the color classes of an instance of \((1,\alpha,\alpha)\text{-}\RCP\) may simply be padded with unreachable vertices to reduce \((1,\alpha,\alpha)\text{-}\RCP\) to \((1,\alpha+1,\alpha+1)\text{-}\RCP\).
\end{proof}

Our analysis shows that~$(a,b,\alpha)\text{-}\RCP$ is \NP-complete for almost all choices of~$a$ and~$b$ given that~$\alpha$ is at least three while particular choices are polynomial-time solvable.
This establishes a partial classification that mirrors Schaefer's dichotomy theorem.
However, we did not establish a complete dichotomy as the complexity of~$(1,1,2)\text{-}\RCP$,~$(1,2,2)\text{-}\RCP$ and~$(\alpha,\alpha,\alpha)\text{-}\RCP$ remains open as an interesting problem for future work.
The complexity of the corresponding~$\SAT$-problems suggests that these problems may be solvable in polynomial time.

\section{Structural Results}\label{sec:structural-results}
It is noteworthy that the case~$\alpha=2$ seems to play a special role for the color-constrained path problem.
This is similar to how 2 is a ``magic number'' for the chromatic number, e.g.~Yeo~\cite{DBLP:journals/jal/Yeo99} obtained stronger results for semicomplete bipartite digraphs than for general semicomplete multipartite digraphs.
This motivates us to study the class of semicomplete multipartite digraphs with independence number at most 2 in this section.
We are unaware of any previous work that expressly studies this particular class.
We prove sufficient conditions for quasi-Hamiltonian paths as generalizations of results for semicomplete digraphs in \cref{thm:not-2-strong-without-xy,thm:2-sep-all-trivial} and a simpler characterization of Hamiltonian paths in this graph class in \cref{thm:hamiltonian-criterion-nsmd}.

\subsection{Sufficient Conditions for Quasi-Hamiltonian Paths}
The algorithm for detecting Hamiltonian~$(s,t)$-paths in semicomplete digraphs from~\cite{DBLP:journals/jal/Bang-JensenMT92} is based on three sufficient conditions for Hamiltonian \((s,t)\)-paths in semicomplete digraphs.
The first is due to Thomassen~\cite[Theorem 5.4]{DBLP:journals/jct/Thomassen80a} and corresponds to \cref{thm:3-paths} restricted to semicomplete digraphs.
Bang-Jensen, Manoussakis and Thomassen~\cite[Theorem~3.3 and~3.4]{DBLP:journals/jal/Bang-JensenMT92} proved the other two.
We augment the results of~\cite{DBLP:journals/dam/GuoLS12} by also lifting these two theorems to sufficient conditions for quasi-Hamiltonian paths in semicomplete multipartite digraphs.
The first condition is lifted to general semicomplete multipartite digraphs in~\cref{thm:not-2-strong-without-xy}, while we lift the second condition to semicomplete multipartite digraphs with independence number at most 2 in~\cref{thm:2-sep-all-trivial}.
Before proving the main results of this section, we isolate a simple argument that appears multiple times to avoid repetitions and then prove a minor result needed for the proof of \cref{thm:2-sep-all-trivial}.

\begin{proposition}\label{prop:shortcut}
	Let~$D=(V,A)$ be a 2-strong semicomplete multipartite digraph with distinct vertices~$x,y,z\in V$ such that there exists an~$(x,y)$-quasi-Hamiltonian path~$P=v_0v_1\dots v_{\ell+1}$ in~$D-z$.
	If any of the following conditions is satisfied, then~$D$ contains an~$(x,y)$-quasi-Hamiltonian path:
	\begin{enumerate}[label=(\arabic*)]
		\item\label{prop:shortcut-1}~$|H(z)|>1$
		\item\label{prop:shortcut-2} There exist~$i$ and~$j$ with~$0\leq i<j\leq \ell+1$ such that~$v_iz,zv_j\in A$.
		\item\label{prop:shortcut-3}~$xz\in A$ or~$zy\in A$
	\end{enumerate}
\end{proposition}
\begin{proof}
	\noindent\labelcref{prop:shortcut-1}: In this situation,~$P$ is also an~$(x,y)$-quasi-Hamiltonian path in~$D$.
	
	\noindent\labelcref{prop:shortcut-2}: The path~$P$ and~$v_0\dots v_izv_j\dots v_{\ell+1}$ can be merged using \cref{lem:merging} to obtain the desired path.
		 
	\noindent\labelcref{prop:shortcut-3}: As we may assume that neither of the previous situations are applicable,~$|H(z)|=1$.
	Consider the case~$zy\in A$.
	As~$D$ is 2-strong, there exists a path from~$V(P)$ to~$z$ that does not contain~$y$.
	Let~$Q$ be a shortest such path and let its starting vertex be~$v_i$.
	Then~$P$ and~$v_0\dots v_{i-1}Qy$ can be merged using \cref{lem:merging} to obtain the desired path.
	The case~$xz\in A$ is treated analogously by considering a shortest path from~$z$ to~$V(P)$ that does not contain~$x$.
\end{proof}

\begin{lemma}[Generalization of {\cite[Lemma~3.2]{DBLP:journals/jal/Bang-JensenMT92}}]\label{lem:2-sep-arcs}
	Let~$D=(V,A)$ be a semicomplete multipartite digraph and~$x,y\in V$ such that there exist two internally disjoint~$(x,y)$-paths and a~2-separator~$\{u, v\}$ of~$x$ and~$y$ in~$D$.
	Let~$D'$ denote the digraph obtained from~$D$ by adding the arcs~$vu$ and~$uv$ if they do not exist already.
	Then~$D$ contains an~$(x,y)$-quasi-Hamiltonian path if and only if~$D'$ contains an~$(x,y)$-path intersecting each element of~$\C(D)$.
\end{lemma}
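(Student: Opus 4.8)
The forward direction is immediate: an $(x,y)$-quasi-Hamiltonian path of $D$ uses only arcs of $D\subseteq D'$ and already meets every color class, so it is an $(x,y)$-path of $D'$ intersecting each element of $\C(D)$. All of the content lies in the converse, and the plan is to turn a covering path of $D'$ back into a genuine quasi-Hamiltonian path of $D$. So let $P'$ be an $(x,y)$-path of $D'$ meeting every element of $\C(D)$. The only arcs of $D'$ that are not arcs of $D$ join $u$ and $v$, and a path meets each of $u,v$ at most once; hence $P'$ traverses \emph{at most one} such new arc. If it traverses none, then $P'$ is itself the desired $(x,y)$-quasi-Hamiltonian path of $D$. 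Otherwise $P'$ uses exactly one new arc, say $uv$ (the case $vu$ is symmetric upon interchanging $u$ and $v$), and cutting $P'$ there yields two vertex-disjoint paths $P_1:=P'[x,u]$ and $P_2:=P'[v,y]$ that use only arcs of $D$ and jointly meet every color class.

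To reassemble them I would exploit the separator. Write $R$ for the set of vertices reachable from $x$ in $D-\{u,v\}$ and put $S:=V\setminus(R\cup\{u,v\})$; then $x\in R$, $y\in S$, and, because $\{u,v\}$ is an $(x,y)$-separator, there is no arc of $D$ from $R$ to $S$ (such an arc would extend an $(x,\cdot)$-path in $D-\{u,v\}$ and force its head into $R$). Applying Menger's theorem to the two internally disjoint paths lets me assume $\{u,v\}$ is a nontrivial separator, so $u\neq v$, $u,v\notin\{x,y\}$, and—relabelling the two paths as $Q_1\ni u$ and $Q_2\ni v$—each crosses the separator exactly once: $Q_1[x,u]\subseteq R\cup\{u\}$, $Q_1[u,y]\subseteq\{u\}\cup S$, and symmetrically for $Q_2$. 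Since $P_1$ avoids $v$ and ends at $u$, one gets $V(P_1)\subseteq R\cup\{u\}$; and since $P_2$ starts at $v$, ends at $y\in S$, and can no longer use a separator vertex to re-cross from $R$ to $S$, it never enters $R$, so $V(P_2)\subseteq\{v\}\cup S$. In particular every vertex of $P'$ in $R\cup\{u\}$ lies on $P_1$ and every vertex in $\{v\}\cup S$ lies on $P_2$, so $P_1$ carries all $R$-side coverage and $P_2$ all $S$-side coverage of $P'$.

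Now I would splice the crossing parts of $Q_1,Q_2$ onto $P_1,P_2$ to manufacture two $(x,y)$-paths of $D$,
\[
A:=P_1\cdot Q_1[u,y], \qquad B:=Q_2[x,v]\cdot P_2,
\]
which are simple (the glued pieces meet only in $u$, resp.\ $v$, as $R$, $S$ and $\{u,v\}$ are pairwise disjoint) and whose union still meets every color class, since $A\supseteq P_1$ and $B\supseteq P_2$. The decisive feature is that $A$ and $B$ have the \emph{same} endpoints $x$ and $y$: applying \cref{lem:merging} to $A$ and $B$ with $X$ a transversal of $\C(D)$ inside $V(A)\cup V(B)$ can therefore only output an $(x,y)$-path, and that path meets every color class—precisely the $(x,y)$-quasi-Hamiltonian path of $D$ we seek. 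This is how the endpoint bookkeeping, which is the feature that makes \cref{lem:merging} awkward to use here, is forced to come out right.

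The main obstacle is verifying the hypothesis of \cref{lem:merging}, namely that $A\cup B$ contains no directed cycle. Here the separator does most of the work: every arc of $A\cup B$ either stays within $R$, within $S$, or crosses forward across $\{u,v\}$, and there is no backward arc, so any directed cycle would have to lie entirely inside $R$ (formed by the $R$-parts $P_1[x,u[$ and $Q_2[x,v[$) or entirely inside $S$ (formed by $Q_1]u,y]$ and $P_2]v,y]$). I expect dispatching these two confined cases—by choosing the internally disjoint paths $Q_1,Q_2$ suitably, or by a preliminary local merge on each side so that only one $R$-path and one $S$-path remain—to be the one genuinely technical step, after which the construction closes and the symmetric $vu$-case follows by interchanging $u$ and $v$.
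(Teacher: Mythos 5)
Your forward direction, the observation that $P'$ can traverse at most one new arc, the reachability decomposition into $R$ and $S$ with no arc of $D$ from $R$ to $S$, and the resulting confinement of $P'[x,u]$, $P'[v,y]$, $Q_1$ and $Q_2$ to the two sides are all correct and follow the same skeleton as the paper's argument. The genuine gap is exactly the step you defer at the end: the acyclicity of $A\cup B$ is not a routine verification to be dispatched later but the crux of the lemma, and your construction can actually fail it. Inside $R$ you overlay two essentially unrelated walks out of $x$, namely $P'[x,u[$ and $Q_2[x,v[$, and nothing prevents an arc of one from running against an arc of the other (for instance $P'[x,u]=xabu$ and $Q_2$ beginning $xbav$ puts the digon $ab,ba$ into $A\cup B$); the same can happen in $S$ between $Q_1]u,y]$ and $P']v,y]$. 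Neither of your proposed repairs closes this: a ``preliminary local merge'' on each side would itself have to invoke \cref{lem:merging}, whose hypothesis is precisely the acyclicity you lack, and ``choosing $Q_1,Q_2$ suitably'' is the right instinct but is where the actual idea of the proof lives.

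The paper's resolution is to not graft the halves of $Q_1$ and $Q_2$ wholesale. It keeps the covering path $P'$ as the backbone and attaches two \emph{shortest} connectors: a shortest path in $D$ from the vertex set $V(P'[x,u])$ to $v$ whose internal vertices lie on the $x$-side, and a shortest path from $u$ to the vertex set $V(P'[v,y])$ whose internal vertices lie on the $y$-side (both exist because one of the two internally disjoint $(x,y)$-paths passes through $v$ and the other through $u$). Minimality forces every internal vertex of each connector off $P'$, so the two resulting $(x,y)$-paths together consist of $P'$ minus the fake arc plus two outward branches that never return to $P'$; their union is then acyclic by inspection, it still meets every element of $\C(D)$, and \cref{lem:merging} applied to two $(x,y)$-paths necessarily returns an $(x,y)$-path. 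Until you supply an argument of this kind, the proposal does not yield the lemma.
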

\begin{proof}
	Clearly, \(D'\) contains an~$(x,y)$-path intersecting each element of~$\C(D)$ if~$D$ contains an~$(x,y)$-quasi-Hamiltonian path.
	Let~$P$ be an~$(x,y)$-path intersecting each element of~$\C(D)$ in~$D'$.
	If~$P$ does not use any of the additional arcs, there is nothing to prove, so assume~$P$ uses the new arc~$vu$.
	The case where~$P$ uses~$uv$ instead is symmetric.
	Let \(P_1\) and \(P_2\) be two internally disjoint~$(x,y)$-paths in \(D\).
	Because \(\{u,v\}\) is a separator of \(x\) and \(y\), one of the paths contains \(u\) and the other path contains \(v\).
	Without loss of generality, assume that~\(P_1\) contain \(v\).
	Let \(w_1\) be the first vertex in \(P_1[v,y]\) from \(V(P[u,y])\) and let \(w_2\) be the last vertex in \(P_2[x,u]\) from~\(V(P[x,v])\).
	Then \(V(P[x,v[)\) is disjoint from \(V(P_1]v,w_1])\) as otherwise~\(\{u,v\}\) would not separate \(x\) and \(y\).
	Similarly, \(V(P]u,y])\) is disjoint from \(V(P_2[w_2,u[)\).
	Define~${Q_1\coloneq P[xv]P_1]v,w_1[P[w_1,y]}$ and~${Q_2\coloneq P[xw_2]P_2]w_2,u[P[u,y]}$.
	These paths only use arcs that are present in~$D$ and~$Q_1\cup Q_2$ does not contain a directed cycle.
	Applying \cref{lem:merging} yields an~$(x,y)$-quasi-Hamiltonian path in~$D$.
\end{proof}

Care must be taken when applying \cref{lem:2-sep-arcs}.
While~\cite[Lemma~3.2]{DBLP:journals/jal/Bang-JensenMT92}, i.e.~\cref{lem:2-sep-arcs} restricted to semicomplete digraphs, transforms one semicomplete digraph into another semicomplete digraph, applying \cref{lem:2-sep-arcs} to a semicomplete multipartite digraph is only guaranteed to yield another semicomplete multipartite digraph when the independence number is at most 2.
This is caused by the fact that when~$u$ and~$v$ are part of an independent set of size~3, say~$\{u,v,w\}$, adding arcs between~$u$ and~$v$ forces the existence of at least one arc between~$\{u,v\}$ and~$w$ in order to satisfy the definition of semicomplete multipartite digraphs.

\rstMtTwo
\begin{proof}
	This is the most technical proof of the paper, so we first give a high-level overview before beginning the proof itself.
	Using the structure of the linear decomposition \(R_1,\dots,R_k\) of \(D-\{x,y,z\}\), we obtain an \((x,y)\)-quasi-Hamiltonian path \(P=v_0\dots v_{\ell+1}\) in \(D-z\) with \cref{lem:non-strong}.
	By \cref{prop:shortcut}, it then only remains to integrate \(z\) into \(P\) to obtain an \((x,y)\)-quasi-Hamiltonian path in \(D\).
	We choose \(P\) as the longest possible quasi-Hamiltonian path with~\(v_1\in R_1\) and \(v_\ell\in R_k\).
	A longest path is used as this restricts the arcs between vertices on \(P\) and vertices outside of \(P\) which simplifies the analysis.
	We first show that \(zv_2,v_{\ell-1}z\in A\) in \cref{claim:second-vertex-arc-to-z}.
	Technically, \cref{claim:second-vertex-arc-to-z} only shows \(v_2z\in A\).
	However, note that reversing all arcs in \(D\) and swapping the roles of \(x\) and \(y\) yields a graph that also satisfies the conditions of \cref{thm:not-2-strong-without-xy}.
	Applying \cref{claim:second-vertex-arc-to-z} to this reversed graphs yields \(v_{\ell-1}z\in A\).
	
	The arcs \(zv_2\) and \(v_{\ell-1}z\) give a starting point for integrating \(z\) into the path \(P\):
	Find a way to skip over \(v_2\), then follow \(P\) to \(v_{\ell-1}\), go to \(v_2\) via \(z\) and find a way to reach \(y\) without using any of the previously visited vertices.
	Vertices from \(R_1\) and \(R_k\) often play a special role in this process.
	Vertices from \(R_1\) dominate all vertices from other linear components offering an opportunity to skip over \(v_2\).
	Similarly, vertices from \(R_k\) are dominated by all vertices from other linear components, so they are potential re-entry points into \(P\) from \(v_2\).
	The vertex \(v_l\) is not the only negative neighbor of \(y\) in \(D-x\).
	We call such vertices dominators.
	Dominators offer an alternative way to reach \(y\) from \(v_2\) that does not require going to \(v_l\).
	This is used when \(v_l\) is already required to skip over \(v_2\).
	
	To prove \cref{claim:second-vertex-arc-to-z}, we show that an \((x,y)\)-quasi-Hamiltonian path exists if \(v_2\rightarrow z\).
	Under this assumption, there exists a positive neighbor \(u\) of \(z\) outside of \(P\) that lies in the same linear component \(R_\alpha\) as \(v_2\).
	In the case \(\alpha<k\), we use the idea of skipping over \(v_2\) and then going to~\(v_l\) from \(v_2\) after moving through \(z\).
	In the case \(\alpha=k\), we use a dominator to get to \(y\) from \(v_2\).
	
	After proving \cref{claim:second-vertex-arc-to-z}, we proceed with a case distinction depending on the index \(q\) of the first vertex of \(P\) in \(R_k\).
	When the vertex \(v_q\) is in the middle of the path, it is easy to integrate~\(z\) into \(P\).
	Only the cases \(q\leq 3\) and \(\ell-1\leq q\) require deeper analysis.
	The main part of the proof deals with the case \(q\leq 3\), which is divided into two subcases.
	In the first subcase, \(P\) does not contain a dominator.
	We show that there either exists an \((x,y)\)-quasi-Hamiltonian path or a vertex of \(P\) can be replaced with a dominator.
	This replacement is carried out in a careful way so as to ensure that the new path also satisfies \(q\leq3\).
	This leads to the second subcase, in which~\(P\) contains a dominator.
	We then show that an \((x,y)\)-quasi-Hamiltonian path exists in this subcase.
	
	The case \(\ell-1\leq q\) is significantly restricted by another reversal argument:
	Unless the structure is very constrained, reversing all arcs results in a digraph that falls into one of the previously analyzed cases resulting in an \((x,y)\)-quasi-Hamiltonian path.
	A simple analysis then completes the proof.
	
	We now begin with the proof.
	We may assume that~$H(x)$ and~$H(y)$ are singletons by adding the arcs~$vx$ or~$yv$ whenever necessary.
	This only makes finding an~$(x,y)$-QHP harder.
	Let~\(R_1,\dots,R_k\) be the linear decomposition of \(D-\{x,y,z\}\).
	As~$D-x$,~$D-y$ and \(D-z\) are strong,~$x$,~$y$ and~$z$ each have a positive neighbor in~$R_1$ as well as a negative neighbor in~$R_k$.
	In particular, there is an~$(x,y)$-quasi-Hamiltonian path in~$D-z$ whose second vertex is in~$R_1$ and whose penultimate vertex is in~$R_k$ by \cref{lem:non-strong}.
	Let~${P=v_0v_1\dots v_\ell v_{\ell+1}}$ be a longest path in~$D-z$ with these properties.
	By \cref{prop:shortcut}, we may assume that~$|H(z)|=1$ and that there exists a~$j$ with~$0<j<\ell+1$ such that~$\{v_{j+1},\dots,v_{\ell+1}\}\rightarrow z\rightarrow\{v_0,\dots,v_{j-1}\}$.
	Because~$D-\{x,y,z\}$ consists of at least 4 color classes, it follows that~$l\geq 4$.
	
	\begin{claim}\label{claim:second-vertex-arc-to-z}
		We may assume~$zv_2\in A$.
	\end{claim}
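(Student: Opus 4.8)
The plan is to split on the position of the ``flip index'' $j$. If $j\ge 3$ then $v_2\in\{v_0,\dots,v_{j-1}\}$, so the established domination $z\to\{v_0,\dots,v_{j-1}\}$ already gives $zv_2\in A$ and there is nothing to do; hence it suffices to treat $j\le 2$. For $j\le 2$ I would argue by contradiction: assume $zv_2\notin A$. Since we have already reduced to the situation $|H(z)|=1$, the vertex $z$ forms its own color class and is therefore adjacent to $v_2$, forcing $v_2z\in A$. Collecting the arcs incident to $z$ near the front of $P$, I then have $zv_0\in A$ (as $v_0=x$ and $j\ge 1$), possibly $zv_1\in A$ (when $j=2$), together with $v_2z\in A$ and the forced threshold $\{v_{j+1},\dots,v_{\ell+1}\}\to z$.

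From here the goal is to discharge this case entirely by producing an $(x,y)$-quasi-Hamiltonian path of $D$, so that for the remainder of the proof we may indeed assume $zv_2\in A$. The intended mechanism is \cref{lem:merging}: I would keep $P$ as one $(x,y)$-path and build a second $(x,y)$-path $Q$ through $z$, chosen so that $P\cup Q$ is acyclic; merging them then yields a single $(x,y)$-path covering a transversal of all color classes, and since $\{z\}$ is itself a color class, this path is quasi-Hamiltonian in $D$. To construct $Q$ I would exploit 2-strongness exactly as in the third case of \cref{lem:shortcut}: since $z\to v_0$ (respectively $z\to v_1$ when $j=2$) and $v_2z\in A$, and since $D-x$ and $D-y$ are 2-strong so that $z$ has neighbours off $P$, a shortest reconnecting path splices $z$ into a route from $x$ to $y$. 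The alternative outlet, should such a splice fail to reach $y$, is to show that the same reconnection instead produces a strictly longer $(x,y)$-quasi-Hamiltonian path of $D-z$ whose second vertex lies in $R_1$ and whose penultimate vertex lies in $R_k$, contradicting the maximality of $P$.

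The main obstacle is acyclicity. Because all out-neighbours of $z$ on $P$ sit at the front (among $v_0,v_1$) while all its in-neighbours sit at the back (among $v_2,\dots,v_{\ell+1}$), the naive route that goes from $x$ to some in-neighbour $v_m$ of $z$, then $v_m\to z\to v_1$, then forward again to $y$ reuses the forward arcs of $P$ and closes the directed cycle $v_1\to\cdots\to v_m\to z\to v_1$, which blocks \cref{lem:merging}. The crux is therefore to leave $z$ without traversing $P$ forward past an already-used in-neighbour --- using the guaranteed off-path neighbours of $z$ and 2-strong connectivity to reach the tail of $P$ cleanly --- or else to convert precisely this offending cycle into additional length and invoke maximality. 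I expect this acyclic-reconnection step to be where essentially all of the work lies; the reduction to $j\le 2$ and the bookkeeping of the threshold are routine.
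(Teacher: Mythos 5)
Your setup is sound and matches the paper's opening move: if $zv_2\notin A$ then $v_2\rightarrow z$ (since $\{z\}$ is its own color class), and by part~b) of \cref{lem:shortcut} this immediately upgrades to $v_i\rightarrow z$ for all $i\geq2$, so $z$'s only possible out-neighbours on $P$ are $v_0$ and $v_1$. You also correctly identify that the whole difficulty is reinserting $z$ without creating a directed cycle with $P$. But that is exactly where your proposal stops: you state that the acyclic reconnection ``is where essentially all of the work lies'' and do not carry it out, and this step is not a routine splice --- it is the entire content of the claim. In the paper, one takes a path $Q$ from $z$ to $V(P)$ in $D-\{x,v_1\}$ internally disjoint from $P$; the only hard case is when every such $Q$ must end at $v_2$ and $z\rightarrow v_1$, which (because $v_2\rightarrow z$) forces an off-path vertex $u$ with $zu\in A$. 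The argument then pins down the position of $u$ in the linear decomposition of $D-\{x,y,z\}$ (it must share a part with $v_2$), splits on whether that part is the terminal part $R_k$, derives color coincidences such as $H(u)=H(v_\ell)$ and $R_k\subseteq H(u)$, introduces \emph{dominators} (vertices of $N^-(y)\setminus\{x,v_\ell\}$, which exist because $D-x$ is $2$-strong) and splits again on whether $P$ contains one, and in each subcase writes down an explicit $(x,y)$-quasi-Hamiltonian path of $D$ or a longer admissible path in $D-z$ contradicting the choice of $P$. None of this machinery appears in your proposal, and you give no mechanism for escaping the generic situation you yourself flag, where $z$'s only exits are $v_0$, $v_1$ and an unknown off-path vertex and every naive reroute closes a cycle.

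A minor additional remark: your preliminary reduction to $j\le2$ is harmless but not needed in the form you state it; the hypothesis that is actually used downstream is the stronger $v_i\rightarrow z$ for all $i\ge2$ obtained from \cref{lem:shortcut}, not the raw threshold at $j$. The substantive gap is the missing case analysis, without which the claim is not proved.
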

	\begin{claimproof}[Proof of \cref{claim:second-vertex-arc-to-z}]
		Assume that this is not the case, so~$v_2\rightarrow z$.
		Then we may assume~$v_i\rightarrow z$ for all~$i\geq 2$ by \cref{prop:shortcut}.
		Let~$Q=z\dots v_i$ be a path from~$z$ to~$V(P)$ in~$D-\{x,v_1\}$ that is internally disjoint from~$V(P)$.
		If~${i\neq 2}$ or~${v_1z\in A}$,~$xv_1\dots v_{i-1}Qv_{i+1}\dots v_{\ell+1}$ is the desired path.
		Otherwise, we may assume that~$z\rightarrow v_1$ and~$Q$ can only be chosen such that~$i=2$.
		As~$v_2\rightarrow z$,~$Q$ consists of at least~3 vertices.
		The successor \(u\) of \(z\) in \(Q\) satisfies~${u\notin V(P)}$ and~${zu\in A}$.
		Let~$v_2\in R_\alpha$ and~$u\in R_\beta$.
		By the choice of~$u$,~$\{v_3,\dots,v_{\ell+1}\}\Rightarrow u$ and~$v_2$ is reachable from~$u$.
		Therefore, the structure of the linear decomposition implies~$\alpha=\beta$ as one of~$\{v_3,\dots,v_\ell\}$ must have an arc to~$u$ due to the fact that~$l\geq4$.
		
		Assume~$\alpha<k$.
		Then~$H(u)=H(v_\ell)$ as otherwise~$u\rightarrow v_l$, a contradiction.
		Consequently,~\({v_{\ell-1}u\in A}\), so that \(\{v_2,\dots,v_{\ell-1}\}\subseteq R_\alpha\).
		This also implies~$R_k\subseteq H(u)$ because any vertex in~$R_k\setminus H(u)$ yields the desired path by the structure of the linear decomposition.
		Additionally, we may assume~\(Q=zuv_2\) as the successor of \(u\) on \(Q\) must be adjacent to \(v_\ell\).
		Assume there exists a \((v_1,v_j)\)-path~$Q_j$ in~\({D-\{x,y\}}\) internally disjoint from~$V(P)$ for some~$2<j<l$.
		This implies~$u\notin V(Q_j)$ as otherwise we obtain a path from~\(z\) to \(v_j\) that is internally disjoint from~\({V(P)}\).
		Then one of~$xQ_jv_{j+1}\dots v_{\ell-1}zuv_2\dots v_{j-1}v_{\ell}y$ or~$xQ_j v_{j+1}\dots v_{\ell-1}zuv_2\dots v_{j-2}v_{\ell}y$ is the desired path.
		Thus, we may assume that any path in~$D-\{x,y\}$ from~$v_1$ to another vertex in~$V(P)$ must use~$v_2$ or~$v_\ell$.
		The only positive neighbors of~$v_\ell$ in~$D-y$ are~$x$ and~$z$.
		Consider two internally disjoint~$(z,v_3)$-paths~$P_1,P_2$ in~$D-y$.
		Then one path must use~$x$, say~$P_1$, and it cannot contain~$v_2$ as this would imply that~$z$ can reach a vertex from~$V(P)$ in~$D-y$ without using a vertex from~$\{x,v_1,v_2\}$ through~$P_2$.
		Similarly, if~$v_1\in V(P_1)$, then~$v_1$ must occur on~$P_1$ before~$x$ as otherwise~$P_1$ must also contain~$v_2$.
		Let~$v_\gamma$ be the first vertex in~$P_1$ from~$V(P)$ after~$x$.
		By the previous analysis, \(\gamma\geq3\).
		Then~$P_1[x,v_\gamma]P]v_\gamma,v_{\ell-1}]zv_1\dots v_{\gamma-1}v_{\ell}y$ or~$P_1[x,v_\gamma]P]v_\gamma,v_{\ell-1}]zv_1\dots v_{\gamma-2}v_{\ell}y$ is the desired path.
		
		This leaves only the case~$\alpha=k$.
		Then~$R_1\subseteq H(u)$ and~$Q=zuv_2$ as otherwise we would obtain a longer~$(x,y)$-QHP in~$D-z$.
		Assume that~$|R_1|>1$ and let~$w\in R_1-v_1$.
		It follows that~$zw\in A$ as~$D-x$ is 2-strong and~$N^-(w)\subseteq\{x,y,z\}$.
		Then~$xv_1v_2zwv_3\dots v_{\ell+1}$ or~$xv_1v_2zwv_4\dots v_{\ell+1}$ is the desired path, so we may assume~$|R_1|=1$.
		We will call vertices in~$N^-(y)-\{x,v_\ell\}$ dominators.
		Dominators exist as~$D-x$ is 2-strong.
		It is impossible for~$v_1$ to be a dominator as~$|R_1|=1$ and \(y\) does not lie on a 2-cycle.
		Similarly,~$u$ cannot be a dominator as then~$xv_1\dots v_{\ell}zuy$ is the desired path.
		Assume that~$v_i\in V(P)$ is a dominator.
		One of the paths~$xv_1v_{i+1}\dots v_{\ell}zuv_2\dots v_iy$ or~$xv_1v_{i+2}\dots v_{\ell}zuv_2\dots v_iy$ is the desired path unless~${i=\ell-1}$ and~$H(v_1)=H(v_\ell)$.
		The vertex~$v_\ell$ has a predecessor~$w$ in~$D-x$ that is not~$v_{\ell-1}$.
		It follows that~${H(w)\neq H(v_\ell)=H(v_1)}$, so~$v_1w\in A$.
		If~$w\notin V(P)$,~$xv_1wv_{\ell}zuv_2\dots v_{\ell-1}y$ is the desired path.
		Else,~$w=v_j$ for some~$j$ so that one of~$xv_1v_{j+1}\dots v_{\ell-1}zuv_2\dots v_jv_{\ell}y$ or~$xv_1v_{j+2}\dots v_{\ell-1}zuv_2\dots v_jv_{\ell}y$ is an \((x,y)\)-quasi-Hamiltonian path in~$D$.
		
		Now assume~$V(P)$ does not contain a dominator.
		Let~$Q_1=v_\zeta\dots w$ and~$Q_2=v_\xi\dots w$ be different internally disjoint paths from~$V(P)$ to a dominator~$w$ in~$D-y$.
		Such paths exist as~$D-y$ is 2-strong.
		Assume that~$v_\zeta$ and~$v_\xi$ are the only vertices in~$Q_1$ and~$Q_2$ in~$V(P)$.
		If this is not the case, say there exists~${v_{\zeta'}\in V(Q_1)\cap V(P)}$ with~$\zeta'\neq\zeta$, then set~$Q_1\coloneq Q_1[v_{\zeta'},w]$.
		Iterating this process and an analogous one for~$Q_2$ achieves the condition.
		If~$\zeta\neq \ell-1$, choose~$Q_0\coloneq Q_1$.
		If~$\zeta=\ell-1$ and~${\xi\neq \ell-1}$, choose~$Q_0\coloneq Q_2$.
		Otherwise, choose the longer of the two paths as~$Q_0$ breaking ties by choosing~$Q_1$.
		Let~$Q_0=v_i\dots w$.
		Then~$z\notin V(Q_0)$ as otherwise merging~$P$ and~$P[x,v_i[Q_0y$ yields the desired path.
		Similarly, we obtain~$w\rightarrow z$ as the arc~$zw$ yields the path~$v_0\dots v_{\ell}zwy$.
		It follows that~$u\notin V(Q_0)$ as this yields~$v_0\dots v_{\ell}zP_0[u,w]y$, which is the desired path.
		If~$i=l$, we obtain a longer~$(x,y)$-QHP in~$D-z$.
		If~$i=\ell-1$, then also~$\zeta=\xi=\ell-1$ and so~$Q_0$ is the longer path among~$Q_1$ and~$Q_2$.
		In particular,~$Q_0$ has at least one internal vertex as~$Q_1$ and~$Q_2$ are different, so that~$v_0\dots v_{\ell-2}Q_0y$ is a longer~$(x,y)$-QHP in~$D-z$.
		If~$i\leq 1$,~$P_0zv_1\dots v_{\ell}y$ or~$xP_0zuv_2\dots v_{\ell}y$ is the desired path.
		This implies~$1<i<\ell-1$, so one of~$xv_1v_{i+1}\dots v_{\ell}zuP[v_2,v_i[P_0y$ or~$xv_1v_{i+2}\dots v_{\ell}zuP[v_2,v_i[P_0y$ is the desired path.
	\end{claimproof}
	
	By reversing all arcs in \(D\), we obtain a digraph \(D'\) which also satisfies the conditions of \cref{thm:not-2-strong-without-xy}.
	Applying \cref{claim:second-vertex-arc-to-z} to \(D'\) yields \(zv_{\ell-1}\in A(D')\), i.e.~\(v_{\ell-1}z\in A\).
	The existence of the arcs \(zv_2\) and \(v_{\ell-1}z\in A\) implies~$zv_1,v_{\ell}z\in A$ by \cref{prop:shortcut}.
	Let~$q$ be the minimal index such that \(v_q\) is in \(R_k\).
	Then~$\{v_1,\dots,v_{q-1}\}\Rightarrow R_k$.
	The vertices~$x$ and~$y$ are not a part of the linear decomposition and~$v_1$ lies in~$R_1$, so~$2\leq q\leq l$.
	
	\textbf{Case 1},~$3<q<\ell-1$:
	If~$v_1v_q\in A$, set~$a=q$, otherwise set~$a=q+1$.
	If~$v_{q-1}v_\ell\in A$, set~$b=q-1$, otherwise set~$b=q-2$.
	Then~$xv_1v_av_{a+1}\dots v_{\ell-1}zv_2v_3\dots v_bv_{\ell}y$ is an~$(x,y)$-quasi-Hamiltonian path in~$D$.
	
	\textbf{Case 2},~$q\leq3$:
	Recall that dominators are vertices in~$N^-(y)-\{x,v_\ell\}$.
	We distinguish the case where \(P\) contains a dominator and the case where \(P\) does not contain a dominator.
	
	\textbf{Case 2.1},~$V(P)$ does not contain any dominators:
	Let~$w$ be an arbitrary dominator.
	Assume that~$w$ only has positive neighbors in~$V(P)-y$.
	Let~$Q=v_i\dots w$ be a shortest path from~$V(P)$ to~$w$ in~$D-y$.
	Then $Q$ must have internal vertices.
	If~$z\in V(Q)$, merging~$P$ and~$v_0\dots v_{i-1}Qy$ using \cref{lem:merging} yields the desired path.
	Otherwise,~$v_0\dots v_{i-1}Qv_{i+1}\dots v_{\ell+1}$ or~$v_0\dots v_{i-1}Qv_{i+2}\dots v_{\ell+1}$ is a longer~$(x,y)$-quasi-Hamiltonian path in~$D-z$.
	
	Now assume~$w$ has a negative neighbor in~$V(P)-y$.
	If~$zw\in A$,~$xv_1\dots v_{\ell}zwy$ is the desired path.
	If~$xw\in A$,~$xwzv_1\dots v_{\ell}y$ is the desired path.
	If~$v_1w\in A$,~$xv_1wzv_2\dots v_{\ell}y$ is the desired path.
	Let~$j$ be minimal with the property that~$wv_j\in A$ and there exists~$i<j$ such that~$v_iw\in A$.
	The value~$\ell+1$ satisfies this property, so~$j$ is well-defined.
	If~${H(v_{j-1})\neq H(w)}$,~$v_0\dots v_{j-1}wv_j\dots v_{\ell}y$ would be a longer~$(x,y)$-QHP in~$D-z$, so~$H(w)=H(v_{j-1})$.
	Minimality of~$j$ implies~$v_{j-2}w\in A$.
	
	If \(j=3\),~$P'\coloneq v_0v_1wv_3\dots v_\ell y$ is an~$(x,y)$-quasi-Hamiltonian path in~$D-z$ of equal length.
	By \cref{claim:second-vertex-arc-to-z}, we may assume \(zw\in A\), which has already been excluded previously, a contradiction.
	
	If~$3\neq j<\ell+1$,~$P'\coloneq v_0\dots v_{j-2}wv_j\dots v_{\ell}y$ is an~$(x,y)$-quasi-Hamiltonian path in~$D-z$ of equal length that contains a dominator.
	By a previous argument,~$w\rightarrow x$, so~$1\leq j-2$.
	As~\(j\neq 3\),~\(j\) must be at least four.
	Consequently, the path \(P'\) begins with the prefix \(xv_1v_2v_3\).
	In particular, one of the first four vertices of \(P'\) lies in \(R_k\).
	Additionally, the second vertex of \(P'\) is still \(v_1\in R_1\) and the penultimate vertex of~$P'$ remains~$v_\ell\in R_k$.
	Thus, we may restart the proof with~$P'$ instead of~$P$.
	The proof will then finish in case 2.2, so there is no circular reasoning.
	
	If~$j=\ell+1$,~$xv_1v_{\ell}zv_2\dots v_{\ell-1}wy$ is the desired path unless~$H(v_1)=H(v_\ell)$.
	As the arc~$v_{\ell-1}w$ exists,~${w\in R_k}$.
	The previous analysis precludes the arc~$v_1w$, so~$H(v_1)=H(w)$.
	As~$D-y$ is 2-strong, \(w\) has a predecessor \(w'\neq v_{\ell-1}\).
	If~$w'=v_i\in V(P)$, then~${1<i<\ell-1}$ as~$w\Rightarrow\{x,v_1,v_\ell\}$.
	Thus, one of~$xv_1v_{i+1}\dots v_{\ell}zv_2\dots v_iwy$ or~$xv_1v_{i+2}\dots v_{\ell}zv_2\dots v_iwy$ is the desired path.
	Otherwise,~${w'\notin V(P)}$.
	There must be an arc between~$v_1$ and~$w'$.
	If~${v_1w'\in A}$,~$xv_1w'wzv_2\dots v_{\ell}y$ is an~$(x,y)$-QHP.
	Else,~$w'\in R_1$ and there exists a path~$Q'$ from~$v_1$ to~$w'$ in~$R_1$.
	If~${v_2\notin V(Q')}$, then~$xQ'wzv_2\dots v_{\ell}y$ is the desired path.
	Otherwise,~${v_2\in V(Q')\subseteq R_1}$ and~$xv_1v_3\dots v_{\ell-1}zv_2v_{\ell}y$ or~$xv_1v_4\dots v_{\ell-1}zv_2v_{\ell}y$ is the desired path.
	
	\textbf{Case 2.2},~$V(P)$ contains a dominator:
	Let~$1\leq i<l$ be maximal such that~$v_i$ is a dominator.
	We differentiate the cases~$i\neq1$ and~$i=1$.
	Assume~$i\neq 1$.
	Then one of~$xv_1v_{i+1}\dots v_{\ell}zv_2\dots v_iy$ or~$xv_1v_{i+2}\dots v_{\ell}zv_2\dots v_iy$ is the desired path unless~$H(v_1)=H(v_\ell)$ and~$i=\ell-1$.
	If there exists a~\(j\) with~$2\leq j<\ell-1$ and~$v_jv_\ell\in A$, then~$xv_1v_{j+1}\dots v_{\ell-1}zv_2\dots v_jv_{\ell}y$ or~$xv_1v_{j+2}\dots v_{\ell-1}zv_2\dots v_jv_{\ell}y$ is the desired path.
	Otherwise, it follows that~$v_\ell\Rightarrow\{v_1,\dots,v_{\ell-2}\}$.
	As~$y$ is not contained in any 2-cycles, we obtain~$v_\ell\rightarrow \{y,z\}$, but~$D-x$ is 2-strong, so there exists a vertex~${u\in V-\{x,y,z,v_{\ell-1}}\}$ such that~$uv_\ell\in A$.
	Let~$Q=v_j\dots u$ be a shortest path from~$V(P)$ to~$u$ in~$D-\{y,v_\ell\}$.
	If~${z\in V(Q)}$,~$P[x,v_j[Qv_{\ell}y$ and~$P$ can be merged to obtain the desired path, so assume this is not the case.
	If~${j=l-2}$, then~$xv_1v_{\ell-1}zv_2\dots v_{\ell-3}Qv_{\ell}y$ is the desired path.
	If~${j=\ell-1}$,~$xv_1\dots v_{\ell-2}Qv_{\ell}y$ is a longer~$(x,y)$-QHP in~$D-z$, which is a contradiction.
	If~$j=0$,~$Qv_{\ell}zv_1\dots v_{\ell-1}y$ is the desired path.
	If~$0<j<l-2$,~$P[x,v_j[Qv_\ell v_{j+1}\dots v_{\ell-1}y$ is a longer~$(x,y)$-QHP in~$D-z$ unless~$H(v_{j+1})=H(v_\ell)$.
	Then~$xv_1v_{j+2}\dots v_{\ell-1}zP[v_2,v_j[Qv_{\ell}y$ is the desired path.
	
	Now assume~$i=1$.
	As~$y$ has a positive neighbor in~$R_1$ and does not lie on a 2-cycle,~$|R_1|>1$.
	Additionally,~$R_1$ must be strong as otherwise~$N^-(v_1)\subseteq\{x,y,z\}$ which contradicts the fact that~$D-x$ is 2-strong and~$y$ does not lie on a 2-cycle.
	Let~$u\in R_1\cap N^+(v_1)$.
	Note the it may be that \(u=v_2\).
	If the arc~$uv_2$ exists, we obtain a longer~$(x,y)$-QHP in~$D-z$.
	If the arc~$v_2u$ exists, we obtain~$v_2\in R_1$ and one of~$xv_1v_3\dots v_{\ell-1}zv_2v_{\ell}y$,~$xv_1uv_3\dots v_{\ell-1}zv_2v_{\ell}y$,~$xv_1v_3\dots v_{\ell-1}zv_2uv_{\ell}y$ or~$xv_1v_4\dots v_{\ell-1}zv_2uv_{\ell}y$ is the desired path.
	Thus, there can be no arcs between~$v_2$ and~$u$, so~\(H(u)=H(v_2)\).
	Then~$xv_1uv_3\dots v_{\ell}y$ is an~$(x,y)$-QHP in~$D-z$ with the same length as~$|P|$, so we may assume~$zu\in A$ by previous arguments.
	If \({H(u)\neq H(v_\ell)}\), one of~$xv_1v_3\dots v_{\ell-1}zuv_{\ell}y$ or~$xv_1v_4\dots v_{\ell-1}zuv_{\ell}y$ is the desired path, so we may assume this is not the case.
	Thus, the vertex \(u\) is a vertex in \(R_1\) with \(H(u)=H(v_\ell)\) that behaves similarly to \(v_2\) and potentially even is~\(v_2\).
	
	If there exists~$2\leq j<\ell-2$ such that~$v_jv_\ell\in A$, one of the paths~$xv_1v_{j+1}\dots v_{\ell-1}zv_2\dots v_jv_{\ell}y$ or~$xv_1v_{j+2}\dots v_{\ell-1}zv_2\dots v_jv_{\ell}y$ is the desired path.
	If there exists~$2\leq j<\ell-2$ with~${v_jz\in A}$, then one of the paths~$xv_1v_3\dots v_jzuv_{j+1}\dots v_{\ell}y$,~$xv_1v_4\dots v_jzuv_{j+1}\dots v_{\ell}y$,~$xv_1v_3\dots v_jzuv_{j+2}\dots v_{\ell}y$ or~$xv_1v_4\dots v_jzuv_{j+2}\dots v_{\ell}y$ is the desired path.
	Thus, assume no such~$j$ exists.
	As~$i=1$ is maximal, this implies~$\{y,z,v_\ell\}\Rightarrow\{v_2,\dots,v_{\ell-3}\}$.
	If the arc \(v_{\ell-2}v_\ell\) exists, one of \(xv_1v_{\ell-1}zv_2\dots v_{\ell-2}v_\ell y\) or \(xv_1uv_{\ell-1}zv_3\dots v_{\ell-2}v_\ell y\) is the desired path.
	Otherwise, it is impossible that~$R_k\subseteq\{v_2,\dots,v_\ell\}$ as the vertex~$v_{\ell-2}$ separates~$v_3\in R_k$ from~$v_1$ in~$D-x$ in this case contradicting the fact that~\(D-x\) is 2-strong.
	Therefore, there exists~$w\in R_k-\{v_2,\dots,v_\ell\}$ such that~$wz\in A$ or~$wy\in A$.
	In the first case,~$xv_1wzv_2\dots v_{\ell}y$ or~$xv_1uwzv_3\dots v_{\ell}y$ is the desired path.
	Otherwise,~$z\rightarrow w$ and~$xv_1\dots v_{\ell}zwy$ is the desired path.
	
	\textbf{Case 3},~$3<\ell-1\leq q$:
	If~$|R_1\cap V(P)|>2$, reversing all arcs and applying the previous arguments to the reversal of~$P$ yields a~$(y,x)$-QHP in the reversed graph, i.e.~the desired path in~$D$.
	Thus, we may assume that~$R_1\cap V(P)\subseteq\{v_1,v_2\}$.
	Assume~$v_1v_{\ell-1}\in A$.
	Then the path~$xv_1v_{\ell-1}zv_2\dots v_{\ell-2}v_{\ell}y$ or~$xv_1v_{\ell-1}zv_2\dots v_{\ell-3}v_{\ell}y$ is the desired path unless~$l=4$ and~${H(v_2)=H(v_4)}$.
	However, this is impossible as~$\chi(D-\{x,y,z\})\geq4$, so we may assume that~$v_1v_{\ell-1}\notin A$.
	This implies~$H(v_1)=H(v_{\ell-1})$ and~$l>4$.
	Then~$H(v_1)\neq H(v_{\ell-2})$, so~${v_1v_{\ell-2}\in A}$.
	If~$H(v_{\ell-3})\neq H(v_\ell)$,~$xv_1v_{\ell-2}v_{\ell-1}zv_2\dots v_{\ell-3}v_{\ell}y$ is the desired path.
	Otherwise,~${l>5}$ as~$\chi(D-\{x,y,z\})\geq4$,~$H(v_1)=H(v_{\ell-1})$ and~$H(v_{\ell-3})=H(v_\ell)$.
	Thus, the path~$xv_1v_{\ell-2}v_{\ell-1}zv_2\dots v_{\ell-4}v_{\ell}y$ is an \((x,y)\)-quasi-Hamiltonian path in \(D\).
\end{proof}

For semicomplete digraphs, the chromatic number is identical to the number of vertices.
Thus, the condition \(\chi(D-\{x,y,z\})\geq4\) simplifies to \(|V|\geq7\).
A semicomplete digraph satisfying all other conditions of \cref{thm:not-2-strong-without-xy} has at least 6 vertices, so restricting \cref{thm:not-2-strong-without-xy} to semicomplete digraphs almost exactly yields \cite[Theorem~3.3]{DBLP:journals/jal/Bang-JensenMT92}, which does not have any restriction on the number of vertices.

\rstMtThree
\begin{proof}
	As this only makes finding an~$(x,y)$-QHP harder, we may assume that~$H(x)$ and~$H(y)$ are singletons by adding the arcs~$vx$ or~$yv$ whenever necessary while ensuring that~$y\rightarrow x$.
	If no~2-separators exist, we obtain the desired path by \cref{thm:3-paths}.
	By \cref{thm:not-2-strong-without-xy}, we may assume that~$D-\{x,y\}$ is 2-strong.
	Let~$u,w\in V$ be a 2-separator of~$x$ and~$y$.
	Assume that~$uwu$ is a 2-cycle using \cref{lem:2-sep-arcs}.
	We will assume that~$N^-(y)=\{u,w\}$ as the case~$N^+(x)=\{u,w\}$ is symmetric.
	
	Assume there is an arc from~$x$ to~$\{u,w\}$, without loss of generality let~$xu\in A$.
	If~$xw\in A$, let~$Q$ be a~$[u,w]$-QHP in~$D-\{x,y\}$ as given by \cref{thm:weak-qhp}.
	\Cref{thm:weak-qhp} is applicable as we have
	\begin{equation*}
		\chi(D-\{x,y,u,w\})\geq\left\lceil\frac{|V|-4}{\alpha(D)}\right\rceil\geq6.
	\end{equation*}
	Then~$xQy$ is the desired path.
	Otherwise,~$xw\notin A$.
	If there are 2 internally disjoint~$(x,w)$-paths in~$D-\{y,u\}$, there are~3 internally disjoint~$(x,w)$-paths of length at least 2 in~$D-y$ and so \cref{thm:3-paths} yields an~$(x,w)$-QHP in~$D-y$ and with that an~$(x,y)$-QHP in~$D$.
	Thus, there exists~$r\in V-\{x,y,u,w\}$ such that~$\{u,r\}$ is a 2-separator of~$x$ and~$w$ in~$D-y$.
	As this is also a 2-separator of~$x$ and~$y$ in~$D$, it must be a trivial 2-separator, so~$N^+(x)=\{u,r\}$.
	By \cref{lem:2-sep-arcs}, we may assume~$uru$ to be a 2-cycle in~$D$.
	If~$rw\notin A$, there must be 2 internally disjoint~$(r,w)$-paths in~$D-\{x,y,u\}$ because all 2-separators of~$x$ and~$y$ are trivial.
	With the third path~$ruw$, this yields an~$(r,w)$-QHP in~$D-\{x,y\}$ through \cref{thm:3-paths} and with that an~$(x,y)$-QHP in~$D$, so assume~$rw\in A$.
	
	Set~$D'\coloneq D-\{x,y\}$.
	If~$D'-r$ contains a~$[u,w]$-QHP, we obtain the desired path, so assume this is not the case.
	Thus, one of the two conditions of \cref{thm:weak-qhp} must apply as we have
	\begin{equation*}
		\chi(D-\{x,y,r,u,w\})\geq\left\lceil\frac{|V|-5}{\alpha(D)}\right\rceil\geq5.
	\end{equation*}
	If the first condition applies,~$D'-\{r,u\}$ is not strong.
	As~$D'$ is 2-strong, there exists an~$(r,u)$-QHP~$Q$ in~$D'$ by \cref{lem:not-2-strong} and~$xQy$ is the desired path.
	If the alternative condition applies,~${D'-\{r,w\}}$ is not strong and contains an~$(r,w)$-QHP in~$D'$ by a symmetric argument, which also leads to the desired path.
	This completes the proof if there is an arc from~$x$ to~$\{u,w\}$, so we may assume~$\{u,w\}\Rightarrow x$.
	
	If there are~3 internally disjoint~$(x,\{u,w\})$-paths in~$D-y$, there are~3 internally disjoint~$(x,u)$-paths or~3 internally disjoint~$(x,w)$-paths of length at least 2, so \cref{thm:3-paths} yields an~$(x,y)$-QHP.
	Therefore, we may assume that there are~$r,s\in V-\{x,y,u,w\}$ such that~$\{r,s\}$ is a~2-separator of~$x$ and~$\{u,w\}$.
	Then~$\{r,s\}$ also separates~$x$ and~$y$, so~${N^+(x)=\{r,s\}}$.
	As before, assume that~$srs$ is a 2-cycle in~$D$.
	If~$\{u,w\}\Rightarrow\{r,s\}$, then the fact that all~$(x,y)$-separators of size~2 are trivial implies that there are~3 internally disjoint~$(\{r,s\},\{u,w\})$-paths in~$D-\{x,y\}$.
	Thus, there are~3 internally disjoint~$(v_0,v_1)$-paths for some~$v_0\in\{r,s\}$ and~$v_1\in \{u,w\}$ in~$D-\{x,y\}$ completing the proof using \cref{thm:3-paths}.
	Without loss of generality, assume that~$ru\in A$.
	If~$\{u,w\}\Rightarrow s$, there are~3 internally disjoint~$(s,\{u,w\})$-paths of length at least~2 in~$D-\{x,y\}$ because all 2-separators of~$(x,y)$ are trivial, which again completes the proof through \cref{thm:3-paths}.
	Thus, we may assume that~$su\in A$ or~$sw\in A$.
	
	Consider the case~$su\in A$.
	If~$D-\{x,y,u\}$ contains an~$[r,s]$-QHP~$Q$,~$xQuy$ is the desired path.
	Otherwise, \cref{thm:weak-qhp} implies that~$D-\{x,y,u,r\}$ or~$D-\{x,y,u,s\}$ is not strong.
	If~$D-\{x,y,u,r\}$ is not strong, \cref{lem:not-2-strong} yields an~$(r,u)$-QHP~$Q$ in~$D-\{x,y\}$ as~$D-\{x,y\}$ is~2-strong.
	If~$D-\{x,y,u,s\}$ is not strong, \cref{lem:not-2-strong} yields an~$(s,u)$-QHP~$Q$ in~$D-\{x,y\}$.
	In either case,~$xQy$ is the desired path.
	If~$rw,sw\in A$, we argue similarly to obtain an~$(x,y)$-QHP.
	Thus, we may assume that the only arcs from~$\{r,s\}$ to~$\{u,w\}$ are~$ru$ and~$sw$.
	
	Set~$R\coloneq D-\{x,y,r,s,u,w\}$.
	As~$D-\{x,y\}$ is 2-strong, there are at least~2 arcs in each direction between~$R$ and~$\{r,s,u,w\}$.
	Assume~$R$ is not strong with linear decomposition~$R_1,\dots,R_k$.
	Then there exists an~$(a,b)$-QHP in~$R$ for any pair of vertices~$a\in R_1$ and~$b\in R_k$ by \cref{lem:non-strong}.
	If there exists~${v_0\in\{r,s\}}$ and~${v_1\in\{u,w\}}$ such that~$N^+(v_0)\cap R_1\neq\emptyset$ and~$N^-(v_1)\cap R_k\neq\emptyset$, there exists a~$(v_0,v_1)$-QHP in~${D[V(R)\cup\{v_0,v_1\}]}$.
	This path easily expands to an~$(x,y)$-QHP in~$D$.
	Thus,~$R_1\Rightarrow\{r,s\}$ or~$\{u,w\}\Rightarrow R_k$.
	In the first case, both~$u$ and~$w$ must have positive neighbors in~$R_1$ as~$D-\{x,y\}$ is 2-strong.
	A vertex in~$R_k$ must have a positive neighbor~$v_1\in\{r,s,u,w\}$.
	No matter which one it is, it can be combined with some~$v_0\in\{u,w\}$ to obtain a~$(v_0,v_1)$-QHP in~$D[V(R)\cup\{v_0,v_1\}]$ which extends to an~$(x,y)$-QHP in~$D$.
	The case~$\{u,w\}\Rightarrow R_k$ is symmetric.
	Therefore, we may assume that~$R$ is strong, so there exists a quasi-Hamiltonian cycle in~$R$ by \cref{lem:cycle-cover}.
	
	Let~$C=v_0\dots v_{\ell-1} v_0$ be a quasi-Hamiltonian cycle in~$R$.
	If~$\{r,s\}\Rightarrow V(C)$, let~$P=v_i\dots a$ be a shortest~$(V(C),\{r,s\})$-path in \(D-\{x,y\}\).
	As \({D-\{x,y\}}\) is 2-strong, both \(u\) and \(w\) must have negative neighbors in \(C\) so that \(P\) consists of~3 vertices, i.e.~\(P=v_iba\) with \({b\in\{u,w\}}\).
	Let~\(c\in\{r,s\}\setminus\{a\}\) and \(d\in\{u,w\}\setminus\{b\}\).
	Then one of~$xcC[v_{i+1},v_i[Pdy$ or~$xcC[v_{i+2},v_i[Pdy$ is the desired path where indices are viewed modulo~$\ell$.
	Thus, we may assume that there exists an arc from~$V(C)$ to~$\{r,s\}$.
	Without loss of generality, assume that~$v_1r\in A$.
	If~$sv_2\in A$ or~$wv_2\in A$, we obtain the~$(x,y)$-QHP~$xsC[v_2,v_1]ruwy$ or~$xswC[v_2,v_1]ruy$, so~$v_2\Rightarrow\{s,w\}$.
	As~$H(s)\neq H(w)$, one of the arcs~$v_2s$ or~$v_2w$ must exist in~$D$.
	Using an analogous argument, we obtain~$v_3\Rightarrow\{r,u\}$.
	Inductively, we conclude that~$v_{2i+1}\Rightarrow\{r,u\}$ and~$v_{2i}\Rightarrow\{s,w\}$ for all~$i\in\N$.
	In particular, \(\ell\) can be assumed to be even as otherwise both properties are satisfied by all vertices of the cycle, which results in the desired path.
	
	If~$V(C)\Rightarrow\{r,s\}$, let~$P=a\dots v_i$ be a shortest~$(\{r,s\},V(C))$-path in~$D-\{x,y\}$.
	Because~\({D-\{x,y\}}\) is 2-strong, both \(u\) and \(w\) must have positive neighbors in \(C\), so that \(P\) consists of~3 vertices, i.e.~\({P=abv_i}\) with \({(a,b)\in\{(s,u),(r,w)\}}\).
	Let \(c\in\{r,s\}\setminus\{a\}\) and \(d\in\{u,w\}\setminus\{b\}\).
	Then,~${xPC]v_i,v_{i-1}]cdy}$ or~${xPC]v_i,v_{i-2}]cdy}$ is the desired path.
	Thus, we may assume that~$r$ or~$s$ has a positive neighbor on~$C$, i.e.~$rv_{2i}\in A$ or~$sv_{2i+1}\in A$ for some~$i\in\N$.
	In either case, an inductive argument analogous to the previous one yields~$\{s,w\}\Rightarrow v_{2i+1}\Rightarrow\{r,u\}$ and~$\{r,u\}\Rightarrow v_{2i}\Rightarrow\{s,w\}$ for all~$i\in\N$.
	
	Assume the set~$R$ contains at least~5 pairwise different color classes.
	Then, by \cref{lem:2-out-of-3-path}, there exists a QHP~$Q=w_1\dots w_\ell$ in~$D[R]$ with end vertices in~$\{v_0,v_2,v_4\}$.
	One of the arcs~$rw_1$ and~$uw_1$ must exist as~$\alpha(D)\leq2$.
	Similarly, one of the arcs~$w_{\ell}s$ and~$w_{\ell}w$ must exist.
	If~$rw_1\in A$, then~$xrQswuy$ or~$xsrQwuy$ is an \((x,y)\)-quasi-Hamiltonian path in \(D\).
	Otherwise,~$uw_1\in A$, so~$xruQswy$ or~$xsruQwy$ is a quasi-Hamiltonian path.

	Finally, assume that~$R$ intersects at most~4 pairwise different color classes.
	Then~${\alpha(D)=1}$ because~$\alpha(D)=2$ would imply~$|V(R)|=|V|-6\geq9$ which yields~${|\C(R)|\geq 5}$.
	The assumption~\({|V|\geq5+5\alpha(D)}\) implies~$|V(R)|\geq4$, so~$C$ consist of exactly 4 vertices.
	As~$\alpha(D)=1$, we have~$v_3\rightarrow\{r,u\}\rightarrow v_0$, so~$xsrv_0v_1v_2v_3uwy$ is a Hamiltonian path in~$D$.
\end{proof}

\Cref{thm:3-paths}, \cref{thm:not-2-strong-without-xy} and \cref{thm:2-sep-all-trivial} restricted to semicomplete digraphs are the structural results used in~\cite{DBLP:journals/jal/Bang-JensenMT92} to construct a polynomial-time algorithm for the Hamiltonian~$(s,t)$-path problem on semicomplete digraphs.
However, it is not clear how one might generalize the algorithmic results of the same work in order to obtain a similar algorithm for quasi-Hamiltonian~$(s,t)$-paths in semicomplete multipartite digraphs with independence number at most~2.
The problem is \NP-complete even when the independence number is bounded by~3 as shown in \cref{sec:covering-path}, but the results of this section suggest that the problem may be polynomial-time solvable for independence number at most~2.

\subsection{Characterizing Hamiltonian Paths}
The existence of Hamiltonian paths in general semicomplete multipartite digraphs is characterized by the existence of a 1-path-cycle factor as shown by Gutin~\cite{DBLP:journals/siamdm/Gutin93}.
Restricting the graph class to those semicomplete multipartite digraphs with independence number at most~2 allows a simpler characterization in terms of unilateral connectivity.
To prove this, we use the following two results:
\begin{theoremq}{\cite{fink1980traceable}}\label{thm:unilateral-to-traceable}
	Let~$n\in\N$.
	If~$\mathcal{P}$ is a collection of vertex-disjoint paths of length at most 2 in~$K_n$, then every unilaterally connected orientation of~$K_n-\bigcup_{P\in\mathcal{P}}E(P)$ contains a Hamiltonian path.
\end{theoremq}

\begin{theoremq}[{\cite[Corollary~2.4]{DBLP:journals/arscom/Volkmann01}}]\label{thm:spanning_mtournament}
	A strong semicomplete multipartite digraph with at least three color classes contains a spanning strong multipartite tournament.
\end{theoremq}

\begin{theorem}\label{thm:hamiltonian-criterion-nsmd}
	Let~$D=(V,A)$ be a semicomplete multipartite digraph such that~$\alpha(D)\leq2$.
	Then~$D$ is unilaterally connected if and only if~$D$ contains a Hamiltonian path.
\end{theorem}
\begin{proof}
	Clearly, \(D\) is unilaterally connected if it contains a Hamiltonian path, so let \(D\) be unilaterally connected.
	We will show that \(D\) contains a spanning unilaterally connected tournament by induction over the number of 2-cycles.
	By \cref{thm:unilateral-to-traceable}, this implies that \(D\) contains a Hamiltonian path.
	If \(D\) does not contain any 2-cycles, \(D\) already is a multipartite tournament.
	
	Let \(R_1,\dots,R_k\) be the linear decomposition of \(D\).
	As \(D\) is unilaterally connected, every linear component either induces a strong component or is a singleton set.
	Due to the structure of the linear decomposition, every 2-cycle is contained in some linear component.
	Let \(R_i\) be a linear component containing a 2-cycle.
	If \(|R_i|\geq5\), then we may assume that \(D[R_i]\) is a strong multipartite tournament by \cref{thm:spanning_mtournament}.
	This leaves only the cases where \(R_i\) intersects exactly two color classes, so \(2\leq|R_i|\leq4\).
	By inserting two new vertices, one with outgoing arcs to all other vertices and one with incoming arcs from all other vertices, we may without loss of generality assume that \(1<i<k\).
	
	If \(|R_i|=2\), let \(R_i=\{v,w\}\).
	First assume \(R_{i-1}\rightarrow R_i\).
	By \cref{lem:linear-decomp}, one of \(v\) or \(w\) has an outgoing arc to \(R_{i+1}\), say \(w\).
	Then \(D-wv\) is unilaterally connected with one fewer 2-cycle, so it contains a spanning unilaterally connected tournament by the induction hypothesis.
	Otherwise, exactly one of \(v\) or \(w\) does not have an incoming arc from \(R_{i-1}\), say \(w\).
	Then \(w\) must have an outgoing arc to \(R_{i+1}\) as the unique vertex with the same color as \(w\) lies in \(R_{i-1}\).
	Again, \(D-wv\) is unilaterally connected, so it contains a spanning unilaterally connected tournament by the induction hypothesis.
	
	If \(|R_i|=3\), let \(R_i=\{u,v,w\}\) with \(H(u)=H(w)\).
	Because \(D[R_i]\) is strong, it contains a~\((u,w)\)-path \(P\), which must be \(uvw\).
	Remove all arcs from \(D\) that run against the direction of \(P\), i.e.~the arcs \(vu\) and \(wv\).
	As all color classes have size at most~2, \(R_{i-1}\rightarrow\{u,w\}\rightarrow R_{i+1}\).
	Thus, \(D-\{vu,wv\}\) is unilaterally connected, so it contains a spanning unilaterally connected tournament by the induction hypothesis.
	
	If \(|R_i|=4\), let \(R_i=\{v_1,v_2,v_3,v_4\}\) with \(H(v_1)=H(v_3)\) and \(H(v_2)=H(v_4)\).
	If \(D[R_i]\), contains a Hamiltonian path \(P\), remove all arcs from \(D\) that run against the direction of \(P\).
	As \(R_{i-1}\rightarrow R_i\rightarrow R_{i+1}\), the resulting digraph is still unilaterally connected, so it contains a spanning unilaterally connected tournament by the induction hypothesis.
	Otherwise, let \(P\) be a~\((v_1,v_j)\)-path.
	Without loss of generality, assume that \(P=v_1v_2v_3\).
	Because \(D[R_i]\) does not contain a Hamiltonian path, \(v_3v_4,v_4v_1\notin A\).
	This yields the \((v_1,v_3)\)-path \(Q=v_1v_4v_3\).
	Because~\(D[R_i]\) does not contain a Hamiltonian path, \(v_2v_1,v_3v_2\notin A\).
	However, this contradicts the assumption that \(R_i\) contains a 2-cycle.
\end{proof}

\Cref{thm:hamiltonian-criterion-nsmd} is best possible in the sense that the condition \(\alpha(D)\leq2\) cannot be relaxed to~\({\alpha(D)\leq3}\).
This fact can be verified by considering the complete bipartite graph~$K_{1,3}$:
This graph does not contain a Hamiltonian path, so no biorientation of it can contain one either.
However, when replacing each edge with two opposing arcs, the result is a strongly connected semicomplete bipartite digraph.
As tournaments are unilaterally connected by definition, \cref{thm:hamiltonian-criterion-nsmd} generalizes Rédei's result stating that every tournament contains a Hamiltonian path~\cite{redei1934kombinatorischer}.

\begin{corollary}\label{cor:Hamiltonian-alg}
	The Hamiltonian path problem can be solved in time~$\Oh(n^2)$ for semicomplete multipartite digraphs with independence number at most 2.
\end{corollary}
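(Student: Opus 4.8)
The plan is to reduce the Hamiltonian path problem to deciding unilateral connectivity and then to show that the latter can be decided in time~$\Oh(n^2)$. The reduction is immediate from \cref{cor:Hamiltonian-cirterion-nsmd}: since~$\alpha(D)\leq 2$, the digraph~$D$ admits a Hamiltonian path if and only if it is unilaterally connected. The first step is therefore to recall the standard characterization of unilateral connectivity in terms of strong components: a digraph is unilaterally connected if and only if its reachability relation is a total preorder, equivalently, if and only if its acyclic condensation admits a directed Hamiltonian path. For the acyclic condensation this in turn means that its strong components can be listed in a topological order in which every pair of consecutive components is joined by an arc.

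The key idea is that this structure can be read off directly from the linear decomposition~$R_1,\dots,R_k$ of \cref{lem:linear-decomp}, which is computable in time~$\Oh(n^2)$. Since~$R_i\Rightarrow R_j$ for all~$i<j$, the blocks are listed in a topological order of the strong components, and by the definition of the linear decomposition consecutive blocks are always joined by an arc. The only possible obstruction is a block~$R_i\subseteq V_j$ that is a subset of a color class of size at least~$2$: such a block is independent, and because~$R_i\Rightarrow R_j$ forbids backward arcs, no two of its vertices can reach one another in~$D$. Hence any such block produces an incomparable pair, so the condensation has no Hamiltonian path and~$D$ is not unilateral. Conversely, if every block is either a strong component or a singleton, then each block is itself a single strong component, consecutive blocks are joined by arcs, and the condensation has a Hamiltonian path. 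This yields the clean criterion that~$D$ is unilaterally connected if and only if no block of its linear decomposition is a subset of a color class of size at least~$2$.

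The algorithm thus computes the linear decomposition in time~$\Oh(n^2)$ and then scans the blocks once, in~$\Oh(n)$ additional time, checking that none is a non-singleton independent set; by \cref{cor:Hamiltonian-cirterion-nsmd} this decides the existence of a Hamiltonian path, and the witnessing path can be recovered within the same time bound. The main point to get right is the equivalence between unilateral connectivity and the block condition, in particular the careful argument that a color-class block of size at least~$2$ genuinely forces a pair of vertices with no connecting path in either direction, so that the condensation cannot be Hamiltonian. I expect this verification to be the only delicate step; note that the unilateral-connectivity test itself is valid for arbitrary independence number, and the hypothesis~$\alpha(D)\leq 2$ enters only through the invocation of \cref{cor:Hamiltonian-cirterion-nsmd}.
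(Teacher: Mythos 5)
Your proposal is correct and follows essentially the same route as the paper: reduce to unilateral connectivity via \cref{cor:Hamiltonian-cirterion-nsmd}, then observe that a semicomplete multipartite digraph is unilateral if and only if no block of its linear decomposition is a non-singleton independent set, which is checkable in $\Oh(n^2)$ from the strong-component structure. The paper phrases the test as looking for two consecutive singleton strong components with no arc between them in the acyclic ordering, but this is the same criterion as yours.
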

\begin{proof}
	A semicomplete multipartite digraph is unilaterally connected if and only if no linear component is an independent set of size at least~2.
	As the input graph \(D\) has independence number at most~2, \(D\) is unilaterally connect if and only if every linear component of size~2 is a~2-cycle.
	The linear decomposition can be computed in time \(\Oh(n^2)\)~\cite{DBLP:journals/dam/Bang-JensenMS13}, so this condition can also be checked in time \(\Oh(n^2)\).
\end{proof}

\Cref{cor:Hamiltonian-alg} slightly improves Gutin's algorithm~\cite{DBLP:journals/siamdm/Gutin93} which runs in time~$\Oh(n^\beta)$ indicating that the bounded independence number makes the class less complex from an algorithmic perspective.
Here,~$\beta$ is the exponent in the running time of the algorithm for computing a minimum cost maximum bipartite matching.
The best currently known value, \(\beta\leq2+\oh(1)\), is due to Chen et.~al.~\cite{DBLP:journals/cacm/ChenKLPGS23}.
While slower, the algorithm from~\cite{DBLP:journals/siamdm/Gutin93} works for SMDs with arbitrary independence number.

\bibliographystyle{elsarticle-num}
\bibliography{qhp}
\end{document}